\numberwithin{equation}{section}
\newtheorem{thm}{Theorem}[section]
\newtheorem{cor}[thm]{Corollary}
\newtheorem{lem}[thm]{Lemma}
\newtheorem{prop}[thm]{Proposition} 
\newtheorem{conjec}[thm]{Conjecture}
\theoremstyle{remark}
\newtheorem*{rem*}{Remark}
\date{}
\author{Heidi Goodson}
\address{Department of Mathematics and Statistics, Haverford College; 370 Lancaster Avenue, Haverford, PA 19041 USA}
\email{hgoodson@haverford.edu}
\title{A Complete Hypergeometric Point Count Formula for Dwork Hypersurfaces}
\begin{document}

\begin{abstract}
We extend our previous work on hypergeometric point count formulas by proving that we can express the number of points on families of Dwork hypersurfaces
$$X_{\lambda}^d: \hspace{.1in} x_1^d+x_2^d+\ldots+x_d^d=d\lambda x_1x_2\cdots x_d$$
over finite fields of order $q\equiv 1\pmod d$ in terms of Greene's finite field hypergeometric functions. We prove that when $d$ is odd, the number of points can be expressed as a sum of hypergeometric functions plus $(q^{d-1}-1)/(q-1)$ and conjecture that this is also true when $d$ is even. The proof rests on a result that equates certain Gauss sum expressions with finite field hypergeometric functions. Furthermore, we discuss the types of hypergeometric terms that appear in the point count formula and give an explicit formula for Dwork threefolds.
\end{abstract}

\maketitle

\section{Introduction}

The motivation for this work comes from a particular family of elliptic curves. For $\lambda\not= 0,1$ we define an elliptic curve in the Legendre family by 
$$E_\lambda : y^2 = x(x-1)(x-\lambda).$$

We compute a period integral associated to the Legendre elliptic curve given by integrating the nowhere vanishing holomorphic $1$-form $\omega=\frac{dx}{y}$ over a $1$-dimensional cycle containing $\lambda$. This period is a solution to a hypergeometric differential equation and can be expressed as the classical hypergeometric series
$$\pi=\int_{0}^{\lambda} \frac{dx}{y}={}_2F_1\left(\left.\begin{array}{cc}
                \frac12&\frac12	\\
		{}&1
               \end{array}\right|\lambda\right).$$
See the exposition in \cite{Clemens} for more details on this. \\

We now specialize to the case where $\lambda\in\mathbb Q\setminus\{0,1\}$. Koike \cite[Section 4]{Koike1995} showed that, for all odd primes $p$, the trace of Frobenius for curves in this family can be expressed in terms of Greene's hypergeometric function \cite{Greene}
\begin{equation}\label{eqn:EC2F1Greene}
a_{E_\lambda}(p)=-\phi(-1)p\cdot {}_2F_1\left(\left.\begin{array}{cc}
                \phi&\phi\\
		{}&\epsilon
               \end{array}\right|\lambda\right)_{p},\end{equation}
where $\epsilon$ is the trivial character and $\phi$ is a quadratic character modulo $p$.\\ 

Note the similarity between the period and trace of Frobenius expressions: the period is given by a classical hypergeometric series whose arguments are the fractions with denominator 2 and the trace of Frobenius is given by a finite field hypergeometric function whose arguments are characters of order 2. This similarity is to be expected for curves. Manin proved in \cite{Manin} that the rows of the Hasse-Witt matrix of an algebraic curve are solutions to the differential equations of the periods. In the case where the genus is 1, the Hasse-Witt matrix has a single entry: the trace of Frobenius. Igusa showed in \cite{Igusa1958} that, for odd primes $p$, the trace of Frobenius is congruent to a classical hypergeometric expression 
\begin{equation}\label{eqn:EC2F1Classical}a_{E_\lambda}(p)\equiv(-1)^{\frac{p-1}{2}}{}_2F_1\left(\left.\begin{array}{cc}
                \frac12&\frac12	\\
{}&1
               \end{array}\right|\lambda\right) \pmod{p}.
\end{equation}                           
Furthermore, in Corollary 3.2 of \cite{GoodsonDwork2017}, we show that the finite field and classical ${}_2F_1$ hypergeometric expressions in Equations  \ref{eqn:EC2F1Greene} and \ref{eqn:EC2F1Classical} are congruent modulo $p$ for odd primes. This result would imply merely a congruence between the finite field hypergeometric function expression and the trace of Frobenius. The fact that Koike showed that we actually have an equality is very intriguing and leads us to wonder for what other varieties this type of equality holds.\\

Further examples of a correspondence between arithmetic properties of varieties and finite field hypergeometric functions have been observed for algebraic curves \cite{Swisher2016, Fuselier10, Lennon1, Mortenson2003a, Vega2011} and for particular Calabi-Yau threefolds \cite{AhlgrenOno00a, McCarthy2012b}. For example, Fuselier \cite{Fuselier10} gave a finite field hypergeometric trace of Frobenius formula for elliptic curves with $j$-invariant $\frac{1728}{t}$, where $t\in \mathbb F_p \setminus \{0,1\}$.  Lennon \cite{Lennon1} extended this by giving a hypergeometric trace of Frobenius formula that does not depend on the Weierstrass model chosen for the elliptic curve. In \cite{AhlgrenOno00a}, Ahlgren and Ono gave a formula for the number of $\mathbb F_p$ points on a modular Calabi-Yau threefold. Greene's hypergeometric functions were also used for modular form results in \cite{AhlgrenOno00a,FOP1, FOP2,Fuselier10,Kilbourn2006,Lennon2,Mortenson2003a,Mortenson2005}. We extended this work in \cite[Theorem 1.1]{GoodsonDwork2017} by showing that the number of points on the family of Dwork K3 surfaces over finite fields can be expressed in terms of Greene's finite field hypergeometric functions. In \cite[Theorems 1.3, 1.4]{GoodsonDwork2017} we also gave a formula for the number of points on Dwork K3 surfaces in terms of p-adic hypergeometric functions, which were defined by McCarthy in \cite{McCarthy2013}. We note that these two $p$-adic results were recently extended to higher dimensional Dwork hypersurfaces in \cite{McCarthy2016arxiv}.  \\

In \cite{GoodsonDwork2017}, we began to develop hypergeometric point count formulas for higher dimensional Dwork hypersurfaces in the following theorem.
\begin{thm}\cite[Theorem 8.1]{GoodsonDwork2017}\label{thm:DworkHyp}
 Let $q\equiv 1\pmod d$, $t=\frac{q-1}{d}$, and $T$ be a generator for $\widehat{\mathbb F_q^{\times}}$. The number of points over $\mathbb F_q$ on the Dwork hypersurface is given by
\begin{align*}
\#X_{\lambda}^d(\mathbb F_q)&=\frac{q^{d-1}-1}{q-1} + q^{d-2}\cdot{}_{d-1}F_{d-2}\left(\left.\begin{array}{cccc}
                T^t&T^{2t}&\ldots &T^{(d-1)t}\\
		{} &\epsilon&\ldots&\epsilon
               \end{array}\right|\frac{1}{\lambda^d}\right)_q\\
  &\hspace{1in} -\frac1q\sum_{j=0}^{d-1}g\left(T^{\frac{j(q-1)}{d}}\right)^d-\frac{1}{q-1}\sum_{\overline j}\prod_{j_i}g\left(T^{\frac{j_i(q-1)}{d}}\right),
\end{align*}
where the last sum is over all $d$-tuples $\overline j =(j_1,\ldots,j_d)$ with $0\leq j_i\leq d-1$ and $\sum{j_i}\equiv 0 \pmod d$.    
\end{thm}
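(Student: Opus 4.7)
The overall strategy is to write the number of affine points on the cone over $X_\lambda^d$ using the standard orthogonality trick with an additive character, Fourier-expand each additive character in terms of multiplicative characters and Gauss sums, and then match the main block of Gauss sum products against the Greene hypergeometric function ${}_{d-1}F_{d-2}$ in the statement.

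To begin, let $\theta$ be a nontrivial additive character of $\mathbb{F}_q$ and write
$$q\cdot\#X_\lambda^{d,\mathrm{aff}}(\mathbb{F}_q)=\sum_{x\in\mathbb{F}_q^d}\sum_{z\in\mathbb{F}_q}\theta\bigl(z(x_1^d+\cdots+x_d^d-d\lambda x_1\cdots x_d)\bigr).$$
The $z=0$ contribution is $q^d$; after passing to projective coordinates via $\#X_\lambda^d(\mathbb{F}_q)=(\#X_\lambda^{d,\mathrm{aff}}(\mathbb{F}_q)-1)/(q-1)$, this yields the $(q^{d-1}-1)/(q-1)$ term. For $z\neq 0$, I would apply the identity $\theta(y)=\frac{1}{q-1}\sum_{\chi}g(\overline\chi)\chi(y)$ to each of the $d+1$ factors $\theta(zx_i^d)$ and $\theta(-d\lambda z x_1\cdots x_d)$, being careful to separate out the boundary contributions where one or more of the $x_i$ vanish.

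After the expansion, summing over $x_i\in\mathbb{F}_q^\times$ and $z\in\mathbb{F}_q^\times$ via orthogonality kills every character that is not a $d$th power, leaving characters of the form $T^{j_i t}$, and the $z$-sum forces the global relation $\sum j_i\equiv 0\pmod d$. The resulting bulk expression is a weighted sum over $d$-tuples $(j_1,\ldots,j_d)$ of products of Gauss sums $g(T^{j_i t})$. Isolating the subsum in which the characters from the cross term $-d\lambda z x_1\cdots x_d$ distribute uniformly across the $x_i$, and repeatedly using $g(\chi)g(\overline\chi)=\chi(-1)q$ together with the Hasse--Davenport product relation, should rewrite that block as $q^{d-2}$ times Greene's ${}_{d-1}F_{d-2}$ at $1/\lambda^d$. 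The degenerate contributions where exactly one variable survives produce the diagonal piece $\frac{1}{q}\sum_j g(T^{jt})^d$, and whatever remains assembles into the final correction sum over $d$-tuples with $\sum j_i\equiv 0\pmod d$.

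The main obstacle will be the bookkeeping required to convert raw products of Gauss sums into Greene's ${}_{d-1}F_{d-2}$ with the correct normalization: Greene's definition carries $T^{jt}(-1)$ sign factors and specific powers of $q$ through Jacobi sums, and these are easy to misplace while juggling the Hasse--Davenport relation. A secondary subtlety is handling the intermediate degenerate cases (some $\chi_i=\epsilon$ in the Fourier expansion) so they land either in the diagonal term or in the remainder without double-counting, and correctly extracting the $1/\lambda^d$ argument from the Gauss sum of the cross-term character.
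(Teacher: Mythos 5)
The paper does not actually prove this statement here: it is imported from Theorem 8.1 of \cite{GoodsonDwork2017}, and that proof starts from Koblitz's closed-form Gauss-sum point count for the Dwork family (quoted at the start of Section \ref{sec:GeneralHypFormula} as $\#X_{\lambda}^d(\mathbb F_q)=N_q(0)+\frac1{q-1}\sum_{j,w}\prod_{i}g(T^{w_it+j})g(T^{dj})^{-1}T^{dj}(d\lambda)$), then applies Hasse--Davenport and Greene's definition to the coset $w=(0,\dots,0)$. Your plan --- expand $\sum_{x,z}\theta\bigl(zf(x)\bigr)$, Fourier-expand each $\theta$ via $\theta(y)=\frac1{q-1}\sum_{\chi}g(\overline\chi)\chi(y)$, and use orthogonality --- is exactly the standard derivation of Koblitz's formula, so in substance you are on the same pipeline; the paper merely outsources that first half to the literature.

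The gap is at the step that carries the content of the theorem. After orthogonality the characters attached to the $x_i$ are not of the form $T^{j_it}$ but $T^{w_it+j}$, where $j$ records the character $\chi_0=T^{-dj}$ coming from the cross term; the free sum over $j$ is indispensable, because it is what becomes Greene's sum over $\chi$ in ${}_{d-1}F_{d-2}$. More seriously, you attribute the correction $-\frac1q\sum_{j=0}^{d-1}g(T^{jt})^d$ to ``degenerate contributions where exactly one variable survives.'' That is not its source. It arises inside the uniform coset itself: converting $\frac1{q-1}\sum_{j}g(T^{j})^{d}g(T^{dj})^{-1}T^{dj}(d\lambda)$ into Greene's function requires the pairing $g(\chi)g(\overline\chi)=\chi(-1)q$, which fails at the $d$ exceptional indices $j=0,t,\dots,(d-1)t$ where $T^{dj}=\epsilon$ and $g(\epsilon)=-1$; splitting those off by hand is what produces $-\frac1q\sum_j g(T^{jt})^d$ (this is exactly the $j=2t,3t$ maneuver carried out for $d=5$ in Section \ref{sec:DworkThreefold}, and Proposition \ref{prop:KoblitzBreakDown} is the general version of it). The genuinely degenerate contributions with some $x_i=0$ instead feed the Fermat count $N_q(0)$, whose Gauss-sum part, together with the nonuniform cosets (which still carry the twist by $j$ and the dependence on $\lambda$), is what the final sum of the theorem abbreviates. ``Whatever remains assembles into the final correction sum'' is therefore not a step you can wave through: it is precisely the bookkeeping that the cited theorem records, and the proposal as written would not reproduce the stated form of the last two terms.
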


In this paper, we prove that when $d$ is odd, the remaining Gauss sum terms can be written in terms of hypergeometric functions.
\begin{thm}\label{thm:DworkHypOdd}
 Let $d$ be an odd integer, and let $q\equiv 1\pmod d$. The number of points over $\mathbb F_q$ on the Dwork hypersurface can be expressed as $(q^{d-1}-1)/(q-1)$ plus a sum of Greene's finite field hypergeometric functions.
\end{thm}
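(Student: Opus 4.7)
The plan is to start from Theorem~\ref{thm:DworkHyp}, which already writes $\#X_\lambda^d(\mathbb F_q)$ as $(q^{d-1}-1)/(q-1)$ plus one Greene ${}_{d-1}F_{d-2}$ term plus the two Gauss sum expressions
\[
-\frac{1}{q}\sum_{j=0}^{d-1} g\bigl(T^{jt}\bigr)^d \quad\text{and}\quad -\frac{1}{q-1}\sum_{\overline{j}}\prod_{i=1}^{d} g\bigl(T^{j_i t}\bigr),
\]
the second sum being over all $d$-tuples with $\sum j_i \equiv 0 \pmod d$. The goal is to show that, for odd $d$, each of these Gauss sum expressions is itself a sum of Greene finite field hypergeometric functions.

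The technical heart, alluded to in the abstract, is a lemma that takes a product of Gauss sums whose character arguments sum to $0$ modulo $q-1$ and rewrites it as a Greene ${}_nF_{n-1}$. The mechanism is standard: pair Gauss sums via the identities $g(A)g(\bar A) = A(-1)\,q$ for $A\neq\epsilon$ and $g(A)g(B) = J(A,B)\, g(AB)$ whenever $AB\neq\epsilon$, convert the surviving Jacobi sums into Greene binomial coefficients $\binom{A\chi}{B\chi} = \tfrac{B\chi(-1)}{q} J(A\chi,\overline{B\chi})$, and finally introduce a free character $\chi$ summed over $\widehat{\mathbb F_q^{\times}}$ so that the product becomes exactly the defining expression of a Greene hypergeometric function evaluated at a power of $\lambda$. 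I would first prove this lemma in isolation, tracking the Gauss sums at the trivial character (which equal $-1$) so that the normalization factors of $q$ and $q-1$ line up correctly.

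With that lemma in hand, the next step is to re-index the two Gauss sum expressions from Theorem~\ref{thm:DworkHyp} so that the lemma applies. For the diagonal sum I would use the fact that in $g(T^{jt})^d$ the character arguments sum to $djt = j(q-1) \equiv 0 \pmod{q-1}$, so the lemma converts each term into a hypergeometric function. For the sum over $d$-tuples I would partition $\overline{j}$ by multiset type, write each orbit as a product of paired Gauss sums $g(T^{jt})g(T^{(d-j)t})$ together with a residual term, and apply the lemma orbit-by-orbit. It is at this pairing step that the hypothesis that $d$ is odd becomes essential: the only character of order dividing $2$ among $\{T^{jt}: 0 \le j \le d-1\}$ is the trivial one, so every nontrivial $T^{jt}$ has a distinct inverse $T^{(d-j)t}$ and no unpaired self-dual Gauss sum survives to block the conversion. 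When $d$ is even the character $T^{(d/2)t}$ is its own inverse and produces an unpaired Gauss sum, which is precisely the obstacle that leaves the even case conjectural.

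The main difficulty I anticipate is not any single identity but the combinatorial bookkeeping of the orbits of $d$-tuples under the natural permutation action combined with the sum-to-zero constraint: ensuring that when all orbits are summed the Jacobi sums assemble correctly into the top and bottom rows of a genuine Greene hypergeometric function, with no spurious character shifts or stray $\lambda$ factors. Once this alignment is checked, the resulting hypergeometric terms combine with the $(q^{d-1}-1)/(q-1)$ constant and the leading ${}_{d-1}F_{d-2}$ from Theorem~\ref{thm:DworkHyp} to give the decomposition claimed in Theorem~\ref{thm:DworkHypOdd}.
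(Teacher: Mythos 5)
Your overall strategy does not match how the proof has to go, and the central step you propose is not a valid operation. The two trailing expressions in Theorem \ref{thm:DworkHyp}, $-\frac1q\sum_j g(T^{jt})^d$ and $-\frac{1}{q-1}\sum_{\overline j}\prod_i g(T^{j_it})$, are constants containing no $\lambda$, so there is nothing to serve as the argument of a Greene hypergeometric function; and the mechanism you describe --- converting the surviving Jacobi sums to binomial coefficients and then ``introducing a free character $\chi$ summed over $\widehat{\mathbb F_q^{\times}}$'' --- changes the value. A single product of binomial coefficients is one summand of the character sum in Greene's definition (Equation \ref{eqn:HGFdef}), not the whole $\frac{q}{q-1}\sum_\chi(\cdots)\chi(x)$; you cannot pass from one to the other. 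The Gauss-sum-to-hypergeometric conversion that actually exists here (Theorem \ref{thm:GaussSumIDFn}) applies to sums over $j\in\{0,\ldots,q-2\}$ of products $\prod g(T^{a_i+j})\prod g(T^{b_i-j})$ twisted by $T^j(\lambda^d)$: it is the $j$-sum together with the $\lambda$-twist that reassembles Greene's character sum, and bare products of Gauss sums do not qualify. A symptom of the problem: the final formula for $d=5$ contains four distinct $\lambda$-dependent ${}_2F_1$ terms and a $\delta(1-\lambda^5)$ term beyond the single ${}_{4}F_{3}$, and two $\lambda$-independent constants cannot produce these.

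The actual proof returns to Koblitz's coset decomposition $\#X_\lambda^d=N_q(0)+\sum_{[w]}S_{[w]}$ and converts each $\lambda$-dependent coset sum $S_{[w]}$ (not just the trivial coset) into a hypergeometric function via Theorem \ref{thm:GaussSumIDFn}; this is where all the lower-order hypergeometric terms come from. Each such conversion leaves behind a constant Gauss sum remainder (Equation \ref{eqn:ExtraGauss}), arising from the values of $j$ at which a denominator Gauss sum sits at the trivial character, and the content of Section \ref{sec:GaussSumCancel} is a counting-and-coefficient argument showing that these remainders exactly negate the Gauss sum part of $N_q(0)$: the constants are cancelled, never ``expressed hypergeometrically.'' The parity of $d$ enters there, but not quite where you place it: the unpaired self-inverse character $T^{(d/2)t}$ in the even case is in fact absorbed (it appears consistently in $G_d$ and in the target products $\prod_i g(T^{w_it})$), and the genuine obstruction left open for even $d$ is that $T^t(-1)$ need not equal $1$, which spoils the matching of coefficients in the cancellation; for odd $d$ one has $T^t(-1)=T^{dt}(-1)=1$ and the argument closes.
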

We prove this result in Section \ref{subsec:d_odd}. We also conjecture that this is true for $d$ even.
\begin{conjec}\label{conjec:DworkHypEven}
 Let $d$ be an even integer, and let $q\equiv 1\pmod d$. The number of points over $\mathbb F_q$ on the Dwork hypersurface can be expressed as $(q^{d-1}-1)/(q-1)$ plus a sum of Greene's finite field hypergeometric functions.
\end{conjec}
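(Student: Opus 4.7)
The plan is to start from the point count in Theorem \ref{thm:DworkHyp}. The two summands $(q^{d-1}-1)/(q-1)$ and $q^{d-2}\cdot{}_{d-1}F_{d-2}(\cdots)$ are already in the desired form, so the entire task reduces to rewriting the two residual Gauss-sum expressions
$$S_1 := \tfrac1q \sum_{j=0}^{d-1} g\bigl(T^{jt}\bigr)^d \quad\text{and}\quad S_2 := \tfrac1{q-1} \sum_{\bar j} \prod_{i=1}^d g\bigl(T^{j_i t}\bigr), \qquad t = \tfrac{q-1}{d},$$
as $\mathbb Q$-linear combinations of Greene hypergeometric values. The basic conversion tool is the standard Greene binomial--Gauss identity $\binom{A}{B} = \frac{B(-1)}{q}\, g(A)g(\bar B)/g(A\bar B)$ for nontrivial $A, B$, combined with the relation $g(A)g(\bar A) = A(-1)\,q$.

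For each multi-index in $S_2$, the odd-$d$ argument underlying Theorem \ref{thm:DworkHypOdd} pairs each nontrivial entry $T^{j_i t}$ with its inverse $T^{-j_i t}$ under the involution $j \mapsto d - j \pmod d$, which is fixed-point-free when $d$ is odd, and collapses the paired Gauss sums into plain powers of $q$ multiplying Greene binomials. For $d$ even, the unique fixed point $j = d/2$ produces the quadratic character $\phi = T^{(q-1)/2}$. I would stratify $S_2$ by the multiplicity $m = \#\{i : j_i = d/2\}$, writing
$$S_2 = \tfrac1{q-1}\sum_{m=0}^{d}\binom{d}{m}\, g(\phi)^m \sum_{\bar j'} \prod_i g\bigl(T^{j_i' t}\bigr),$$
where $\bar j'$ ranges over $(d-m)$-tuples with entries in $\{0,1,\ldots,d-1\}\setminus\{d/2\}$ satisfying $\sum j_i' \equiv -m\,\tfrac{d}{2} \pmod d$. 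On strata with $m$ even, $g(\phi)^m = (\phi(-1) q)^{m/2}$ is a plain power of $q$, the inner sum is indexed by tuples over an alphabet on which $j \mapsto d - j$ is still fixed-point-free, and the odd-$d$ recipe applies verbatim. The same stratification should then handle $S_1$ after first expanding $g(T^{jt})^d$ via a Hasse--Davenport product identity, reducing it to the same framework.

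The main obstacle --- and the reason this case is only conjectural --- is the odd-$m$ strata, where a solitary factor $g(\phi)$ remains. Since $g(\phi)^2 = \phi(-1)\, q$, this factor behaves like $\sqrt{\phi(-1) q}$, which is \emph{not} a priori an element of the $\mathbb Q$-span of Greene hypergeometric values at rational arguments. To close the proof one must either exhibit a cancellation across the odd-$m$ contributions in the total $S_1 + S_2$ --- plausibly forced by the geometric fact that $\#X_\lambda^d(\mathbb F_q)$ is an integer and independent of $\sqrt q$ choices --- or import a finite-field quadratic transformation that absorbs the stray $g(\phi)$ into the character parameters of a hypergeometric function, in the spirit of Greene's finite-field analogues of Kummer's identities. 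Identifying such a mechanism uniformly in even $d$, perhaps via an induction that passes from $d$ to $d/2$ or via the Gauss-sum identities developed in \cite{McCarthy2016arxiv}, is the step I do not see how to reduce to routine calculation, and is where the real content of the even-$d$ conjecture lies.
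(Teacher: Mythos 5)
This statement is a conjecture, and the paper itself records only partial progress on it (Section \ref{subsec:d_even}), so no complete argument exists to match yours against; still, your plan diverges from the paper's actual strategy in a way that would prevent it from reaching the stated goal. The paper does not attempt to rewrite the residual sums of Theorem \ref{thm:DworkHyp} as hypergeometric values. It returns to Koblitz's formula, splits it by cosets $[w]\in W^*$, and applies Proposition \ref{prop:KoblitzBreakDown} (built on Theorem \ref{thm:GaussSumIDFn}) to each $\lambda$-dependent coset sum; each such sum yields one genuine hypergeometric function in $\lambda^d$ plus a residual Gauss-sum \emph{constant}, and the entire point of Section \ref{sec:GaussSumCancel} is that these residual constants \emph{cancel} against the $W^{**}$ part of $N_q(0)$ rather than being converted into anything. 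Your $S_1$ and $S_2$, as written, are $\lambda$-independent constants; massaging them into ``hypergeometric values'' (even if possible) cannot produce the additional $\lambda$-dependent ${}_2F_1$ and ${}_1F_0$ terms that the correct formula must contain --- compare Theorem \ref{thm:ThreefoldPointCount} for $d=5$, where four lower-order ${}_2F_1$'s appear. So the real work lives in the nontrivial cosets, which your decomposition never touches, and your description of the odd-$d$ mechanism (``collapses the paired Gauss sums into \ldots Greene binomials'') is not what Theorem \ref{thm:DworkHypOdd}'s proof does: the pairing $g(T^{vt})g(T^{(d-v)t})=qT^{vt}(-1)$ is used there only to normalize the number of Gauss-sum factors to $d$ so that the residuals can be matched bijectively with $W^{**}$ and cancelled.

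Relatedly, you have located the even-$d$ obstruction in the wrong place. In the paper's bookkeeping the factor $g(T^{\frac{d}{2}t})=g(\phi)$ coming from $\prod_{k=1}^{d-1}g(T^{kt})$ is always either absorbed as one of the $d$ Gauss-sum factors matching an element of $W^{**}$ or cancelled against a partner $g(T^{\frac{d}{2}t})$ arising from the pairs; a solitary $g(\phi)$ never survives, so no square root of $q$ ever threatens the formula. What does survive, and what the paper cannot yet control, is a sign: for $d$ even one may have $T^t(-1)=-1$, and the residual Gauss-sum expressions can carry extraneous factors of $T^t(-1)$ that must be shown to disappear before the cancellation against $N_q(0)$ goes through. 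The paper's proposed (unproven) remedy is a combinatorial swap between pairs $g(T^{it})g(T^{(d-i)t})$ and $g(T^{jt})g(T^{(d-j)t})$ of opposite parity, not a quadratic transformation and not an integrality argument. If you want to make progress on this conjecture, the concrete open step is to prove that such a parity swap is always available, coset by coset.
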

We discuss both progress on this conjecture and obstructions to a complete result in Section \ref{subsec:d_even}. Note that this conjecture has been verified for $d=4$, i.e. for Dwork K3 surfaces (see \cite{GoodsonDwork2017}), and should follow more generally from  McCarthy's work in \cite{McCarthy2016arxiv} and Miyatani's work in \cite{Miyatani2015}. Furthermore, in Section \ref{sec:TypesofTerms} we discuss when certain types of hypergeometric terms will appear in the point count formulas. We show that this often depends on the parity of $d$.\\

Explicit formulas for the number of points on Dwork hypersurfaces are useful for many reasons, such as determining local zeta factors in the global Hasse-Weil Zeta function from which arithmetic L-functions arise. Salerno \cite{Salerno2013} developed point count formulas for Dwork hypersurfaces in terms of Katz's \cite{Katz1990} hypergeometric functions.   We are particularly interested in point count formulas written in terms of Greene's finite field hypergeometric formulas for the following reason. We observe an interesting phenomenon with certain periods associated to Dwork hypersurfaces. These periods can be written in terms of classical hypergeometric series, a fact that was first noted by Dwork in \cite{Dwork1969}. Interestingly, the hypergeometric expressions for the periods and the point counts ``match" in the sense that fractions with denominator $a$ in the classical series coincide with characters of order $a$ in the finite field hypergeometric functions. We saw this matching of expressions with Dwork K3 surfaces in \cite{GoodsonDwork2017}, and here we show that this occurs for Dwork threefolds as well.
\begin{thm}\label{thm:ThreefoldPointCount}
Let $q=p^e$ be a prime power such that $q\equiv 1\pmod 5$, $t=\frac{q-1}{5}$, and $T$ be a generator for $\widehat{\mathbb F_q^{\times}}$. Then
\begin{align*}
\#X_{\lambda}^5(\mathbb F_q)&=\frac{q^{4}-1}{q-1}+ 24 q^2\delta(1-\lambda^5)+ q^{3}{}_{4}F_{3}\left(\left.\begin{array}{cccc}
                T^t&T^{2t}&\ldots &T^{4t}\\
		{} &\epsilon&\ldots&\epsilon \end{array}\right|\frac{1}{\lambda^5}\right)_q\\\\
              \hspace{1in}&+20q^2{}_{2}F_{1}\left(\left.\begin{array}{cc}
                T^{2t}&T^{3t}\\
		{} &\epsilon               \end{array}\right|\frac{1}{\lambda^5}\right)_q+20q^2{}_{2}F_{1}\left(\left.\begin{array}{cc}
                T^{t}&T^{4t}\\
		{} &\epsilon               \end{array}\right|\frac{1}{\lambda^5}\right)_q\\\\
		\hspace{1in}&+30q^2{}_{2}F_{1}\left(\left.\begin{array}{cc}
                T^{t}&T^{3t}\\
		{} &T^{4t}               \end{array}\right|\frac{1}{\lambda^5}\right)_q+30q^2{}_{2}F_{1}\left(\left.\begin{array}{cc}
                T^{t}&T^{2t}\\
		{} &T^{3t}               \end{array}\right|\frac{1}{\lambda^5}\right)_q,
\end{align*}
where $\delta(x)=1$ if $x=0$ and $\delta(x)=0$ otherwise.
\end{thm}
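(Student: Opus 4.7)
The plan is to start from Theorem~\ref{thm:DworkHyp} at $d=5$, which already supplies the $(q^4-1)/(q-1)$ and $q^3\cdot{}_4F_3(\cdots\mid 1/\lambda^5)_q$ pieces of the theorem, and to rewrite the two residual Gauss-sum terms
\[
-\frac{1}{q}\sum_{j=0}^{4}g(T^{jt})^5 \;-\;\frac{1}{q-1}\sum_{\bar j}\prod_{i=1}^{5} g(T^{j_i t}),
\]
summed over $\bar j\in\{0,\dots,4\}^5$ with $\sum j_i\equiv 0\pmod 5$, as the stated combination of four $q^2\cdot{}_2F_1$ terms and $24q^2\delta(1-\lambda^5)$.

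The first step is to partition the $\bar j$-sum by the multiset type of $\bar j$. Because $5$ is prime, a direct check shows that the only partitions of $5$ admitting valid tuples are $(5)$, $(3,1,1)$, $(2,2,1)$, and $(1,1,1,1,1)$; the types $(4,1)$, $(3,2)$, and $(2,1,1,1)$ all force a coincidence that violates $\sum j_i\equiv 0\pmod 5$. Simple counting then gives $5$ constant tuples, $10$ multisets of type $(3,1,1)$ each with $\binom{5}{3,1,1}=20$ orderings, $10$ multisets of type $(2,2,1)$ each with $\binom{5}{2,2,1}=30$ orderings, and the single all-distinct multiset $\{0,1,2,3,4\}$ with $5!=120$ orderings. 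These multiplicities account for the coefficients $20$, $30$, and $120/5=24$ in the theorem.

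Next, for each multiset class I would invoke the paper's Gauss-sum-to-hypergeometric conversion lemma together with the elementary identities $g(\epsilon)=-1$ and $g(\chi)g(\chi^{-1})=\chi(-1)q$ for $\chi\neq\epsilon$ (noting that $T^{kt}(-1)=1$ under $q\equiv 1\pmod 5$). The $10$ multisets of type $(3,1,1)$ organize into two character configurations, distinguished by whether the two singleton characters, after division by the triple character, are $\{T^{t},T^{4t}\}$ or $\{T^{2t},T^{3t}\}$; each configuration assembles into a Greene's ${}_2F_1$ with trivial bottom character and yields one of the $20q^2\cdot{}_2F_1$ terms. The $10$ multisets of type $(2,2,1)$ split analogously into two families that produce the two $30q^2\cdot{}_2F_1$ terms with nontrivial bottom character. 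The single all-distinct multiset contributes a $\lambda$-independent constant whose conversion, via the orthogonality $\sum_\chi \chi(\lambda^{-5})=(q-1)\delta(1-\lambda^5)$, supplies the $24q^2\delta(1-\lambda^5)$ term after reabsorbing its generic-$\lambda$ portion into the four ${}_2F_1$ sums.

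The main obstacle is the bookkeeping: each application of the conversion lemma produces boundary corrections whenever a character in the resulting character sum collapses to $\epsilon$, and these corrections must cancel exactly against the isolated $q^{-1}\sum_j g(T^{jt})^5$ term and the type-$(5)$ piece of the $\bar j$-sum, so that no spurious constant survives beyond $(q^4-1)/(q-1)$. Verifying these cancellations---where the primality and oddness of $d=5$ simplify matters by forbidding degenerate multiset types---is where most of the technical effort sits. Once the accounting is complete, the four ${}_2F_1$'s and the single $\delta$ assemble into exactly the expression of Theorem~\ref{thm:ThreefoldPointCount}.
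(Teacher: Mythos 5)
Your combinatorial classification of the tuples (only the types $(5)$, $(3,1,1)$, $(2,2,1)$, $(1,1,1,1,1)$ survive the congruence $\sum j_i\equiv 0\pmod 5$, with multiplicities yielding $1,24,20,20,30,30$) is correct and matches the paper's six cosets $(0,0,0,0,0)^1$, $(0,1,2,3,4)^{24}$, $(0,0,0,1,4)^{20}$, $(0,0,0,2,3)^{20}$, $(0,0,1,1,3)^{30}$, $(0,0,1,2,2)^{30}$. But the object you propose to decompose is the wrong one, and this is a fatal gap. The residual term as you have written it, $\frac{1}{q-1}\sum_{\bar j}\prod_{i=1}^{5}g(T^{j_i t})$ over $\bar j\in\{0,\dots,4\}^5$, is a \emph{finite, $\lambda$-independent} sum of products of fixed Gauss sums. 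No rearrangement of a constant can produce the terms $20q^2\,{}_2F_1(\cdots\mid 1/\lambda^5)_q$, etc., which genuinely vary with $\lambda$. The conversion machinery (Theorem \ref{thm:GaussSumIDFn}, Proposition \ref{prop:KoblitzBreakDown}) applies only to character sums of the shape $\frac{1}{q-1}\sum_{j=0}^{q-2}(\cdots)\,T^{j}(\lambda^d)$; it has nothing to say about static products $\prod g(T^{j_it})$. In fact those static products are exactly the quantities that appear in $N_q(0)$ and get \emph{cancelled} at the end --- they are not the source of the ${}_2F_1$'s.

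The actual argument must return to Koblitz's formula $\#X_\lambda^5(\mathbb F_q)=N_q(0)+\sum_{[w]}S_{[w]}$ with $S_{[w]}=\frac{1}{q-1}\sum_{j=0}^{q-2}\frac{\prod_i g(T^{w_it+j})}{g(T^{5j})}T^{5j}(5\lambda)$: the paper evaluates $N_q(0)$ via McCarthy's identity (Lemma \ref{lemma:N0}), then for each of the five nontrivial coset classes applies Hasse--Davenport to clear $g(T^{5j})$, peels off the degenerate values of $j$ where a $g(T^0)$ appears, converts the remaining $j$-sum by Theorem \ref{thm:GaussSumIDFn} into a ${}_2F_1$, and finally cancels the peeled-off Gauss-sum products against those in $N_q(0)$. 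Your description of the $(0,1,2,3,4)$ class is also off: after Hasse--Davenport that class collapses directly to $\frac{24q^2}{q-1}\sum_j T^{5j}(\lambda)=24q^2\delta(1-\lambda^5)$; nothing is ``reabsorbed'' into the ${}_2F_1$ sums. To repair your proposal you would need to replace your residual term by the $\lambda$-dependent coset sums $S_{[w]}$ for $[w]\neq[(0,0,0,0,0)]$, at which point your plan becomes essentially the paper's proof.
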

We prove this result in Section \ref{sec:DworkThreefold}. Meanwhile, Candelas, De La Ossa, and Rodriguez-Villegas show in \cite{COV1} that the periods (that are holomorphic at $\lambda=0$) of the Dwork threefold are given by the classical series
\begin{align*}
    &{}_{4}F_{3}\left(\left.\begin{array}{cccc}
                1/5&2/5&3/5&4/5\\
		{} &1&1&1 \end{array}\right|\frac{1}{\lambda^5}\right),\\\\
          &{}_{2}F_{1}\left(\left.\begin{array}{cc}
                2/5&3/5\\
		{} &1              \end{array}\right|\frac{1}{\lambda^5}\right),\hspace{.2in} {}_{2}F_{1}\left(\left.\begin{array}{cc}
                1/5&4/5\\
		{} &1            \end{array}\right|\frac{1}{\lambda^5}\right),\\\\
		&{}_{2}F_{1}\left(\left.\begin{array}{cc}
                1/5&3/5\\
		{} &4/5          \end{array}\right|\frac{1}{\lambda^5}\right),\hspace{.2in} {}_{2}F_{1}\left(\left.\begin{array}{cc}
                1/5&2/5\\
		{} &3/5          \end{array}\right|\frac{1}{\lambda^5}\right),
\end{align*}
with multiplicities matching the coefficients in the point count formula. There are an additional 24 periods that appear only when $\lambda^5=1$, which corresponds to the term $24 q^2\delta(1-\lambda^5)$ in the point count formula. Our work in Section 3 of \cite{GoodsonDwork2017} shows that the ${}_4F_3$ and the first two ${}_2F_1$ classical hypergeometric series and the corresponding finite field hypergeometric functions are congruent modulo $p$ (when $q=p$), however we do no yet have a congruence or identity for the remaining terms.\\

It should be noted that Dwork hypersurface families are particularly nice to work with because of their large group of automorphisms. In general, one should expect many more terms in the point count formula and more periods for a Calabi-Yau manifold. The expected number comes from the Hodge structure, which gives us information about the complex structure of the moduli space and the Betti numbers and dictates the number of expected periods.\\ 

For example, as discussed in Section 3 of \cite{COV1}, the non-trivial Hodge numbers of the Dwork threefold are $h^{1,1}=1$ and $h^{2,1}=101$. This gives a Betti number of $B_3=2(1+h^{2,1})=204$. Thus, we should expect there to be 204 periods of the holomorphic $(3,0)$-form. However, the automorphism group reduces this number to 101. The amount of computation that needs to be done is further whittled down since many of the periods are equivalent modulo the Jacobian ideal. In fact, when grouped together in this way, the number of sets corresponding to a particular period is the same as the coefficient of the ``matching" term in the point count formula.\\

This phenomenon holds true for Dwork K3 surfaces, too. Here, we have Betti number $B_2=22$, so we should expect there to be 22 periods of the holomorphic $(2,0)$-form. However, there are in fact 16 periods and they fall into three distinct types (see Dwork's exposition in Chapter 6 of \cite{Dwork1969b} for more on this). We note that the three types are expressible as classical hypergeometric series and that these series match the hypergeometric functions in the point count formula. We expect that there should be a similar matching of periods and terms in the point count for higher dimensional Dwork hypersurfaces.

\section{Preliminaries}
\subsection{Hypergeometric Series and Functions}
\label{sec:HGF}
We start by recalling the definition of the classical hypergeometric series
\begin{equation}\label{eqn:classicalHGF}
 {}_{n+1}F_{n}\left(\left.\begin{array}{cccc}
                a_0&a_1&\ldots& a_n\\
		{}&b_1&\ldots,& b_n
               \end{array}\right|x\right) = \displaystyle\sum_{k=0}^{\infty}\dfrac{(a_0)_k\ldots(a_n)_k}{(b_1)_k\ldots(b_n)_kk!}x^k,
\end{equation}
where $(a)_0=1$ and $(a)_k=a(a+1)(a+2)\ldots(a+k-1)$.  \\

In his 1987 paper \cite{Greene}, Greene introduced a finite field, character sum analogue of classical hypergeometric series that satisfies similar summation and transformation properties. Let $\mathbb F_q$ be the finite field with $q$ elements, where $q$ is a power of an odd prime $p$. If $\chi$ is a multiplicative character of $\widehat{\mathbb F_q^{\times}}$, extend it to all of $\mathbb F_q$ by setting $\chi(0)=0$. For any two characters $A,B$ of $\widehat{\mathbb F_q^{\times}}$ we define the normalized Jacobi sum by

\begin{equation}\label{eqn:normalizedjacobi}
 \binom{A}{B}:=\frac{B(-1)}{q}\sum_{x\in\mathbb F_q} A(x)\overline B(1-x) = \frac{B(-1)}{q}J(A,\overline{B}),
\end{equation}
where $J(A,B)=\sum_{x\in \mathbb F_q} A(x)B(1-x)$ is the usual Jacobi sum.\\

For any positive integer $n$ and characters $A_0,\ldots, A_n,B_1,\ldots, B_n$ in $\widehat{\mathbb F_q^{\times}}$, Greene defined the finite field hypergeometric function ${}_{n+1}F_n$ over $\mathbb F_q$ by
\begin{equation}\label{eqn:HGFdef}
 {}_{n+1}F_{n}\left(\left.\begin{array}{cccc}
                A_0,&A_1,&\ldots,&A_n\\
		{} &B_1,&\ldots,&B_n
               \end{array}\right|x\right)_q = \displaystyle\frac{q}{q-1}\sum_{\chi}\binom{A_0\chi}{\chi}\binom{A_1\chi}{B_1\chi}\ldots\binom{A_n\chi}{B_n\chi}\chi(x).
\end{equation}

In the case where $n=1$, an alternate definition, which is in fact Greene's original definition, is given by
\begin{equation}\label{eqn:2F1def}
 {}_{2}F_{1}\left(\left.\begin{array}{cc}
                A&B\\
		{} &C
               \end{array}\right|x\right)_q = \epsilon(x)\frac{BC(-1)}{q}\sum_yB(y)\overline{B}C(1-y)\overline{A}(1-xy).
\end{equation}

Note that Greene's finite field hypergeometric functions were defined independently of those defined by Katz \cite{Katz1990} and McCarthy \cite{McCarthy2012c}, relations between them have been demonstrated in \cite{McCarthy2012c}.

\subsection{Gauss and Jacobi Sums}
\label{sec:GaussJacobi}
Unless otherwise stated, information in this section can be found in Ireland and Rosen's text \cite[Chapter 8]{Ireland}.\\

Let $q=p^e$. We define the standard trace map $\text{tr}:\mathbb F_q\rightarrow \mathbb F_p$ by 
$$\text{tr}(x)=x+x^p+\ldots + x^{p^{e-1}}.$$
Let $\pi\in\mathbb C_p$ be a fixed root of $x^{p-1}+p=0$ and let $\zeta_p$ be the unique $p^\text{th}$ root of unity in $\mathbb C_p$ such that $\zeta_p \equiv 1+\pi \pmod{\pi^2}$. Then for $\chi \in\widehat{\mathbb F_q^{\times}}$ we define the Gauss sum $g(\chi)$ to be
\begin{equation}\label{eqn:GaussSum}
 g(\chi):=\sum_{x\in\mathbb F_q} \chi(x)\theta(x),
\end{equation}
where we define the additive character $\theta$ by $\theta(x)=\zeta_p^{\text{tr}(x)}$. Note that if  $\chi$ is nontrivial then $g(\chi)g(\overline\chi)=\chi(-1)q$.\\ 

We have the following connection between Gauss sums and Jacobi sums. For non-trivial characters $\chi$ and $\psi$ on $\mathbb F_q$ whose product is also non-trivial,
$$J(\chi,\psi)=\frac{g(\chi)g(\psi)}{g(\chi\psi)}.$$
More generally, for non-trivial characters $\chi_1,\ldots,\chi_n$ on $\mathbb F_q$ whose product is also non-trivial,
$$J(\chi_1,\ldots,\chi_n)=\frac{g(\chi_1)\cdots g(\chi_n)}{g(\chi_1\cdots \chi_n)}.$$

Another important product formula is the Hasse-Davenport formula.

\begin{thm}\label{thm:HasseDavenport}\cite[Theorem 10.1]{Lang1990}
 Let $m$ be a positive integer and let $q$ be a prime power such that $q\equiv 1 \pmod m$. For characters $\chi,\psi\in\widehat{\mathbb F_q^{\times}}$ we have 
$$\prod_{i=0}^{m-1}g(\chi^i\psi)=-g(\psi^m)\psi^{-m}(m) \prod_{i=0}^{m-1}g(\chi^i).$$
\end{thm}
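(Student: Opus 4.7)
The plan is to start from Theorem \ref{thm:DworkHyp} specialized to $d=5$ and to rewrite the two residual Gauss-sum sums as the four ${}_2F_1$ terms and the delta-function correction appearing in the statement. For $d=5$, Theorem \ref{thm:DworkHyp} gives
$$
\#X_\lambda^5(\mathbb F_q) = \frac{q^4-1}{q-1} + q^3 \cdot {}_4F_3\!\left(\left.\begin{array}{cccc} T^t & T^{2t} & T^{3t} & T^{4t}\\ & \epsilon & \epsilon & \epsilon\end{array}\right|\tfrac{1}{\lambda^5}\right)_q - \frac{1}{q}\sum_{j=0}^4 g(T^{jt})^5 - \frac{1}{q-1}\sum_{\overline j}\prod_{i=1}^5 g(T^{j_i t}),
$$
so the first two terms already match the statement of Theorem \ref{thm:ThreefoldPointCount}, and all the remaining work is to identify the two Gauss-sum sums with the four ${}_2F_1$ terms and the $24q^2\delta(1-\lambda^5)$ correction.

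First I would partition the $625$ tuples $\overline j \in \{0,1,2,3,4\}^5$ with $\sum j_i \equiv 0 \pmod 5$ by multiset type. A direct enumeration via multiplicity vectors $(n_0,\ldots,n_4)$ with $\sum n_i=5$ and $\sum i\,n_i \equiv 0 \pmod 5$ produces four classes: (A) the $5$ constant tuples $(j,j,j,j,j)$; (B) the $120$ permutations of $(0,1,2,3,4)$; (C) ten ``three-equal-plus-two-distinct-singletons'' multisets giving $20$ tuples each; and (D) ten ``two-pairs-plus-one-singleton'' multisets giving $30$ tuples each, accounting for all $5+120+200+300=625$ tuples. The multiplicities $20$ and $30$ already match the coefficients in the target formula, and under the character-conjugation involution $j \leftrightarrow 5-j$, the ten class-(C) multisets split into two orbits of five, yielding the two ${}_2F_1$ families with coefficient $20q^2$, while the ten class-(D) multisets split similarly, yielding the two ${}_2F_1$ families with coefficient $30q^2$.

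Second, for each class I would collapse the Gauss-sum product $\prod_i g(T^{j_i t})$ into a product of Jacobi sums using the standard identities $g(\chi)g(\overline\chi)=\chi(-1)q$ and $g(\chi_1)g(\chi_2)=J(\chi_1,\chi_2)g(\chi_1\chi_2)$, together with the Hasse--Davenport relation (Theorem \ref{thm:HasseDavenport}) with $m=5$ and $\chi=T^t$ to consolidate the five-fold products of order-$5$ Gauss sums. The constraint $\prod_i T^{j_i t} = \epsilon$ guarantees that the residual Gauss sum always collapses to $g(\epsilon)=-1$. Converting each $J(A,\overline B)$ to $B(-1)\,q\,\binom{A}{B}$ and matching the resulting normalized-Jacobi-sum products against Greene's definition \eqref{eqn:HGFdef} (or equivalently the alternate form \eqref{eqn:2F1def}) should identify each class-(C) orbit with $q^2$ times either ${}_2F_1(T^{2t},T^{3t};\epsilon\mid 1/\lambda^5)_q$ or ${}_2F_1(T^t,T^{4t};\epsilon\mid 1/\lambda^5)_q$, and each class-(D) orbit with $q^2$ times either ${}_2F_1(T^t,T^{3t};T^{4t}\mid 1/\lambda^5)_q$ or ${}_2F_1(T^t,T^{2t};T^{3t}\mid 1/\lambda^5)_q$. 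Meanwhile, the contributions from classes (A) and (B), combined with the Hasse--Davenport correction factors $\psi^{-5}(5)$ and character signs $\chi(-1)$, should consolidate into the $24q^2\delta(1-\lambda^5)$ term; the coefficient $24 = 120/5$ reflects the $120$ class-(B) permutations of $(0,1,2,3,4)$ divided by the order of the characters $T^t$ controlling the Hasse--Davenport reduction.

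The main obstacle is the middle step: confirming that the ten multisets in each of classes (C) and (D) really do collapse into only two hypergeometric families per class (rather than ten), which requires the character-conjugation grouping to be compatible with the auxiliary normalized-Jacobi-sum identity advertised in the paper's abstract, and tracking sign factors and residual constants so that the delta-function coefficient lands on exactly $24$. A more conceptual subtlety is that the Gauss-sum inputs from Theorem \ref{thm:DworkHyp} are $\lambda$-independent whereas the target ${}_2F_1$'s depend on $\lambda$; the auxiliary identity must therefore reintroduce the character $\chi(1/\lambda^5)$ via Greene's summation in such a way that the $\lambda$-dependence of the individual ${}_2F_1$'s aggregates to the $\lambda$-independent Gauss-sum sums off of the locus $\lambda^5=1$, where the delta-function correction takes over.
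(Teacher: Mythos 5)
Your proposal does not address the statement you were asked to prove. The statement is the Hasse--Davenport product relation for Gauss sums,
$$\prod_{i=0}^{m-1}g(\chi^i\psi)=-g(\psi^m)\psi^{-m}(m)\prod_{i=0}^{m-1}g(\chi^i),$$
which is a classical identity that the paper does not prove at all: it is imported by citation from Lang \cite[Theorem 10.1]{Lang1990} and then specialized in Corollary \ref{cor:HasseDavenportGeneral}. What you have written instead is a proof sketch of Theorem \ref{thm:ThreefoldPointCount}, the explicit point count formula for the Dwork threefold. Worse, your argument is circular with respect to the assigned statement: in your second step you explicitly invoke ``the Hasse--Davenport relation (Theorem \ref{thm:HasseDavenport}) with $m=5$ and $\chi=T^t$'' as a tool, so you are assuming the very identity you were supposed to establish.

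To actually prove the Hasse--Davenport relation you would need a genuinely different argument, independent of anything in this paper: for instance, the classical approach of regarding both sides as functions of $\psi$ on the character group, showing that their ratio is a root of unity by comparing absolute values $|g(\chi)|=\sqrt{q}$ for nontrivial $\chi$, and then pinning down the constant via Stickelberger-type congruences or the Gross--Koblitz formula; or the approach via the behavior of Gauss sums under the norm map and the multiplicativity of the associated $L$-functions. Nothing of that nature appears in your proposal, so as a proof of the stated theorem it has no content. (As a sketch of Theorem \ref{thm:ThreefoldPointCount} your combinatorial bookkeeping of the cosets $(0,1,2,3,4)^{24}$, $(0,0,0,1,4)^{20}$, etc.\ is broadly in the spirit of the paper's Section \ref{sec:DworkThreefold}, but that is not the question that was asked.)
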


We will use the following specialization of this.
\begin{cor}\label{cor:HasseDavenportGeneral}
 $$g(T^{dj})=\frac{\prod_{i=0}^{d-1} g(T^{it+j})}{T^{-dj}(d)\prod_{i=1}^{d-1}g(T^{it})},$$
 where $q\equiv 1\pmod{d}$, $t=\frac{q-1}{d}$, and $T$ is a generator for $\widehat{\mathbb F_q^{\times}}$.
\end{cor}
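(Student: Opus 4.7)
The corollary is a direct specialization of the Hasse-Davenport product formula (Theorem \ref{thm:HasseDavenport}), so my plan is to plug in the right choice of $m$, $\chi$, and $\psi$, then rearrange.

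First I would apply Theorem \ref{thm:HasseDavenport} with $m=d$, $\chi=T^t$, and $\psi=T^j$. The hypothesis $q\equiv 1\pmod d$ of the corollary is exactly the hypothesis needed to invoke Hasse-Davenport at level $m=d$, and the choices $\chi^i=T^{it}$ and $\chi^i\psi=T^{it+j}$ reproduce the arguments of the Gauss sums appearing in the statement. Since $\psi^m=T^{dj}$, this substitution yields
$$\prod_{i=0}^{d-1} g(T^{it+j}) \;=\; -\,g(T^{dj})\,T^{-dj}(d)\,\prod_{i=0}^{d-1} g(T^{it}).$$

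The next step is to isolate the $i=0$ factor on the right-hand side. Because $T$ generates $\widehat{\mathbb F_q^\times}$ and $t=(q-1)/d$, the character $T^t$ has order $d$, so $T^{0\cdot t}=\epsilon$ is the trivial character. Standard properties of the Gauss sum (extending $\epsilon$ by $\epsilon(0)=0$ and using that the additive character $\theta$ sums to zero over $\mathbb F_q$) give $g(\epsilon)=-1$. Thus $\prod_{i=0}^{d-1}g(T^{it})=-\prod_{i=1}^{d-1}g(T^{it})$, and substituting this into the displayed identity cancels the minus sign from Hasse-Davenport.

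Finally, solving the resulting equation for $g(T^{dj})$ produces
$$g(T^{dj})=\frac{\prod_{i=0}^{d-1} g(T^{it+j})}{T^{-dj}(d)\,\prod_{i=1}^{d-1}g(T^{it})},$$
which is precisely the claim. There is no serious obstacle in this argument; the only point that requires care is tracking the sign coming from $g(\epsilon)=-1$ and verifying that it cancels the explicit minus sign in Theorem \ref{thm:HasseDavenport} so that the final formula has no extraneous sign.
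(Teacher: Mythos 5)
Your proposal is correct and follows exactly the paper's route: the paper's proof is the one-line specialization of Theorem \ref{thm:HasseDavenport} with $m=d$, $\chi=T^t$, $\psi=T^j$, and you have simply made explicit the sign bookkeeping via $g(\epsilon)=-1$ that the paper leaves to the reader.
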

\begin{proof}
 This follows from Theorem \ref{thm:HasseDavenport} using $m=d, \chi=T^t,$ and $\psi=T^j$.
\end{proof}

\section{Gauss Sum Identities}

The following is a Gauss sum relation that generalizes Proposition 2.5 in \cite{GoodsonDwork2017}.

\begin{prop}\label{prop:gaussproduct}
Let $q$ be a prime power such that $q\equiv 1\pmod 4$, $t=\frac{q-1}{4}$, and $T$ be a generator for $\widehat{\mathbb F_q^{\times}}$. Let $a,b$ be multiples of $t$. Then, for $\lambda^4\not=1$,
$$\displaystyle \sum_{j=0}^{q-2} g(T^{j+a})g(T^{-j+b})T^j(-1)T^{4j}(\lambda)=(q-1)g(T^{a+b})T^b(-1)T^{-(a+b)}(1-\lambda^4).$$
\end{prop}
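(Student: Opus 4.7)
The plan is to expand both Gauss sums via the definition (\ref{eqn:GaussSum}) and then reverse the order of summation so that the sum over $j$ is carried out first. After this interchange, orthogonality of characters should collapse the double sum over $\mathbb F_q^{\times}\times\mathbb F_q^{\times}$ (coming from the two Gauss sums) to a single sum.

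Concretely, writing $g(T^{j+a})=\sum_{x\in\mathbb F_q^{\times}} T^{j+a}(x)\theta(x)$ and $g(T^{-j+b})=\sum_{y\in\mathbb F_q^{\times}} T^{-j+b}(y)\theta(y)$, and absorbing $T^j(-1)T^{4j}(\lambda)$ into the $T^j$ piece, the full summand takes the form
$$\sum_{x,y\in\mathbb F_q^{\times}} T^a(x)T^b(y)\,\theta(x+y)\,T^j\!\left(\frac{-\lambda^4 x}{y}\right).$$
Swapping the order of summation and invoking $\sum_{j=0}^{q-2} T^j(z) = (q-1)\delta(z-1)$ for $z\in\mathbb F_q^{\times}$ forces $y=-\lambda^4 x$, which requires $\lambda\neq 0$ (an implicit hypothesis alongside $\lambda^4\neq 1$), and produces an overall factor of $q-1$.

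Substituting $y=-\lambda^4 x$ leaves a single sum over $x\in\mathbb F_q^{\times}$. The factor $T^b(-\lambda^4 x)$ splits as $T^b(-1)T^{4b}(\lambda)T^b(x)$, and the hypothesis that $b$ is a multiple of $t=(q-1)/4$ makes $T^{4b}=\epsilon$, so the spurious $T^{4b}(\lambda)$ factor is trivial. The residual sum reduces to
$$\sum_{x\in\mathbb F_q^{\times}} T^{a+b}(x)\,\theta\!\bigl(x(1-\lambda^4)\bigr),$$
and the linear substitution $u=x(1-\lambda^4)$ (legitimate because $\lambda^4\neq 1$) converts this to $T^{-(a+b)}(1-\lambda^4)\,g(T^{a+b})$, giving exactly the right-hand side.

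The argument is structurally simple: expand, swap, apply orthogonality, change variables. The main obstacle is not conceptual but bookkeeping: the signs from $T^b(-1)$, the vanishing of $T^{4b}$, and the linear substitution on $x$ must all line up cleanly, and the hypothesis that $a,b$ are multiples of $t$ is exactly what makes $T^{4b}(\lambda)$ disappear. One should also be mindful that the identity in the form stated implicitly assumes $\lambda\neq 0$; if $\lambda=0$, both the orthogonality step and the original character $T^{4j}(\lambda)$ force the left-hand side to vanish, while the right-hand side need not, so that case is excluded in practice.
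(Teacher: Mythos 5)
Your proposal is correct and follows essentially the same route as the paper: expand both Gauss sums, interchange the order of summation, apply orthogonality of characters to force $y=-\lambda^4 x$ (the paper equivalently solves $x=-y/\lambda^4$), and finish with a linear change of variables to produce $g(T^{a+b})$ and the factor $T^{-(a+b)}(1-\lambda^4)$. Your observation that $\lambda\neq 0$ is implicitly needed, and that $T^{4b}=\epsilon$ because $b$ is a multiple of $t$, matches the paper's final bookkeeping step exactly.
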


\begin{proof}
We start by expanding the Gauss sums
\begin{align*}
   \sum_{j=0}^{q-2} g(T^{j+a})g(T^{-j+b})T^j(-1)T^{4j}(\lambda) &= \sum_{j=0}^{q-2}T^j(-\lambda^4) \left(\sum_{x\in\mathbb F_q}T^{j+a}(x)\theta(x)\right) \left(\sum_{y\in\mathbb F_q}T^{-j+b}(y)\theta(y)\right)\\
   &= \sum_{j=0}^{q-2}T^j(-\lambda^4) \sum_{x,y\in\mathbb F_q}T^{j+a}(x)T^{-j+b}(y)\theta(x+y)\\ 
   &= \sum_{j=0}^{q-2}T^j(-\lambda^4) \sum_{x,y\in\mathbb F_q^{\times}}T^{j}(x/y)T^{a}(x)T^{b}(y)\theta(x+y)\\
   &= \sum_{x,y\in\mathbb F_q^{\times}}T^{a}(x)T^{b}(y)\theta(x+y)\sum_{j=0}^{q-2}T^j\left(-\tfrac{\lambda^4x}{y}\right).
  \end{align*}
Note that $\sum_{j=0}^{q-2}T^j\left(-\tfrac{\lambda^4x}{y}\right)=0$ unless $-\frac{\lambda^4x}{y}=1$, in which case the sum equals $q-1$. So, we let $x=-\frac{y}{\lambda^4}$ to get
\begin{align*}
 \sum_{j=0}^{q-2} g(T^{j+a})g(T^{-j+b})T^j(-1)T^{4j}(\lambda)   &= (q-1)\sum_{y\in\mathbb F_q^{\times}}T^{a}\left(-\tfrac{y}{\lambda^4}\right)T^{b}(y)\theta\left(-\tfrac{y}{\lambda^4}+y\right)\\
 &= (q-1)\sum_{y\in\mathbb F_q^{\times}}T^{a}\left(-\tfrac{y}{\lambda^4}\right)T^{b}(y)\theta\left(y(-\tfrac{1}{\lambda^4}+1)\right).
  \end{align*}
  
Then we perform the change of variables $y\to y(-1/\lambda^4+1)^{-1}$ to get
\begin{align*}
\sum_{y\in\mathbb F_q^{\times}}T^{a}\left(\tfrac{-y}{\lambda^4-1}\right)T^{b}\left(\tfrac{y}{-1/\lambda^4+1}\right)\theta(y) &=T^{-a}(1-\lambda^4)T^{-b}\left(\tfrac{-1}{\lambda^4}+1\right)\sum_{y\in\mathbb F_q^{\times}}T^{a+b}(y)\theta(y)\\
 &=T^{-a}(1-\lambda^4)T^{-b}\left(\tfrac{\lambda^4-1}{\lambda^4}\right)\sum_{y\in\mathbb F_q^{\times}}T^{a+b}(y)\theta(y)\\
 &=T^{b}(-\lambda^4)T^{-(a+b)}(1-\lambda^4)g(T^{a+b}).
\end{align*}
Hence, 
\begin{align*}
 \displaystyle \sum_{j=0}^{q-2} g(T^{j+a})g(T^{-j+b})T^j(-1)T^{4j}(\lambda)&=  (q-1)T^{b}(-\lambda^4)T^{-(a+b)}(1-\lambda^4)g(T^{a+b})\\
 &=(q-1)g(T^{a+b})T^b(-1)T^{-(a+b)}(1-\lambda^4),
\end{align*}
where the last equation holds because $b$ is a multiple of $t$ and $T^{4t}(\lambda)=1$.

\end{proof}

This proposition generalizes nicely for Gauss sum expressions of a particular form. We first note that by combining Theorem 3.13 and Definition 3.5 of \cite{Greene}, we can express finite field hypergeometric functions in the following way. For characters $A_0, \ldots, A_n, B_1,\ldots, B_n$ over $\mathbb F_q$ and $x\in \mathbb F_q^\times$,

\begin{align}\label{eqn:FnDefinition}
    &{}_{n+1}F_{n}\left(\left.\begin{array}{cccc}
                A_0& A_1 &\ldots &A_n\\
		{} &B_1&\ldots&B_n               \end{array}\right|x_0\right)_q = \frac{\prod_{j=1}^n A_jB_j(-1)}{q^n} \nonumber\\
		&\hspace{.5 in} \cdot \sum_{x_i}A_1(x_1)\overline{A_1}B_1(1-x_1)\cdots A_n(x_n)\overline{A_n}B_n(1-x_n)\overline{A_0}(1-x_0x_1\cdots x_n)
\end{align}

The following theorem relates Gauss sum expressions and Greene's hypergeometric functions. Gauss sum expressions of this form appear in the Dwork hypersurface point count formula of Section \ref{sec:GeneralHypFormula}.

\begin{thm}\label{thm:GaussSumIDFn}
Let $q$ be a prime power such that $q\equiv 1 \pmod d$ and $t=\frac{q-1}{d}$. For a positive integer $n$ and for $1\leq i \leq n$, let $a_i, b_i$ be integer multiples of $t$, not all 0, such that $a_k\not=-b_j$ for all $k,j$ and $b_k\not=b_j$ for $k\not=j$. Then, for $\lambda \not=0$,  
\begin{align*}
&\frac{1}{q-1}\sum_{j=0}^{q-2}\left(\prod_{i=1}^n g(T^{a_i+j})\prod_{i=1}^n T^{-b_i+j}(-1)g(T^{b_i-j})\right) T^{j}(\lambda^d)\\
&\hspace{.5in}=T^{m}(-1)G\cdot q^{n-1}{}_{n}F_{n-1}\left(\left.\begin{array}{cccc}
                T^{b_n+a_1}& T^{b_1+a_1} &\ldots &T^{b_{n-1}+a_1}\\
		{} &T^{a_1-a_2}&\ldots&T^{a_1-a_n}              \end{array}\right|\lambda^d\right)_q,
\end{align*}
where $m=\sum_1^n a_i -\sum_1^{n-1} (b_i +a_1)$ and $G=g(T^{a_2+b_1})\cdots g(T^{a_n+b_{n-1}})g(T^{b_n+a_1})$.
\end{thm}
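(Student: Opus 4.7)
The plan is to convert the Gauss-sum expression on the left-hand side into Greene's character-sum formula for the hypergeometric function on the right-hand side by (i) reindexing via characters, (ii) pairing Gauss sums to produce the ``residue'' factor $G$, and (iii) converting Jacobi sums to normalized Jacobi sums. First I would set $\chi = T^j$, so that the sum over $j$ becomes a sum over all multiplicative characters $\chi$. Under this substitution $g(T^{a_i+j}) = g(T^{a_i}\chi)$ and $g(T^{b_i-j}) = g(T^{b_i}\bar\chi)$, and the prefactor collapses as $\prod_{i=1}^n T^{-b_i+j}(-1) = T^{-\sum b_i}(-1)\chi(-1)^n$. This reshapes the left-hand side into
\[
\frac{T^{-\sum b_i}(-1)}{q-1}\sum_\chi \chi(-1)^n \prod_{i=1}^n g(T^{a_i}\chi)\, g(T^{b_i}\bar\chi)\, \chi(\lambda^d).
\]

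Next I would apply $g(X)g(Y) = J(X,Y)\, g(XY)$, pairing $g(T^{a_1}\chi)$ with $g(T^{b_n}\bar\chi)$ and, for $i = 2,\ldots,n$, pairing $g(T^{a_i}\chi)$ with $g(T^{b_{i-1}}\bar\chi)$. The hypothesis $a_k \neq -b_j$ guarantees that every ``residue'' character $T^{a_i+b_{i-1}}$ is nontrivial, and the product of these residues is exactly $G = g(T^{b_n+a_1})\prod_{i=1}^{n-1} g(T^{a_{i+1}+b_i})$. Converting each remaining Jacobi sum via $J(X,Y) = qY(-1)\binom{X}{\bar Y}$ gives
\[
\prod_{i=1}^n J(T^{a_i}\chi, T^{b_{i-1}}\bar\chi) = q^n\, T^{\sum b_i}(-1)\,\chi(-1)^n \prod_{i=1}^n \binom{T^{a_i}\chi}{T^{-b_{i-1}}\chi},
\]
with indices taken cyclically ($b_0 := b_n$). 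After canceling the $\chi(-1)^{2n}$ and the matching $T^{\pm \sum b_i}(-1)$ factors the left-hand side reduces to $\frac{q^n G}{q-1}\sum_\chi \prod_{i=1}^n \binom{T^{a_i}\chi}{T^{-b_{i-1}}\chi}\chi(\lambda^d)$. To match this against Greene's definition of ${}_nF_{n-1}$ (with distinguished parameter $T^{b_n+a_1}$ on top of $\chi$), I would then make the reindexing $\chi \mapsto T^{-a_1}\chi$; the multiplier $T^{-a_1}(\lambda^d) = 1$ since $a_1$ is a multiple of $t$ with $td = q-1$, so the value of $\lambda^d$ is preserved. Applying the symmetry $\binom{A}{B} = A(-1)B(-1)\binom{\bar B}{\bar A}$ to each factor (combined, where needed, with the dual symmetry $\binom{A}{B} = \binom{A}{A\bar B}$) converts the product into $\binom{T^{b_n+a_1}\chi'}{\chi'}\prod_{i=1}^{n-1}\binom{T^{b_i+a_1}\chi'}{T^{a_1-a_{i+1}}\chi'}$ up to accumulated signs, which is precisely the summand in Greene's formula for the stated ${}_nF_{n-1}$.

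The principal obstacle will be the accounting of all $T^{k}(-1)$ factors accumulated along the way: from the prefactor $\prod T^{-b_i+j}(-1)$, from the Jacobi-to-normalized-Jacobi step, and from each application of $\binom{A}{B} = A(-1)B(-1)\binom{\bar B}{\bar A}$. Repeatedly using $T^k(-1)^2 = 1$ and the relation $T^{kd}(\lambda) = 1$ (valid since $t \mid k$ whenever $k$ is a multiple of $t$), these should telescope into a single factor $T^{m}(-1)$ with $m = \sum_{i=1}^n a_i - \sum_{i=1}^{n-1}(b_i+a_1)$, matching the statement. A secondary technical point is that for the finitely many values of $\chi$ that make $T^{a_i}\chi$ or $T^{b_{i-1}}\bar\chi$ trivial, the Gauss-to-Jacobi identity must be interpreted through Greene's extended definitions of $\binom{A}{B}$ at the boundary; a direct verification (analogous to the $\binom{A}{\epsilon} = -1/q$ computation) shows the pairing identity continues to hold there, so the character sum is valid over all $\chi$.
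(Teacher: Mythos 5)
Your route is genuinely different from the paper's. The paper never passes through Jacobi sums: it expands every Gauss sum as $\sum_x \chi(x)\theta(x)$, collapses the $j$-sum by orthogonality (solving for $x_1$), and then performs a long chain of substitutions in the resulting multiple sum over field elements until part of it factors off as the product $G$ and the remainder matches the ``integral representation'' of ${}_{n}F_{n-1}$ recorded in Equation \eqref{eqn:FnDefinition}. You instead stay on the multiplicative side throughout: reindex by $\chi=T^j$, pair $g(T^{a_i}\chi)g(T^{b_{i-1}}\bar\chi)=J(T^{a_i}\chi,T^{b_{i-1}}\bar\chi)\,g(T^{a_i+b_{i-1}})$ to peel off $G$, normalize the Jacobi sums into Greene's binomials, and match the original definition \eqref{eqn:HGFdef}. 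Your version is shorter and makes the hypotheses transparent: $a_k\neq-b_j$ is exactly what keeps each residue character $T^{a_i+b_{i-1}}$ nontrivial and prevents both paired characters from degenerating simultaneously, and your observation that $g(X)g(Y)=J(X,Y)g(XY)$ survives when exactly one of $X,Y$ is trivial (both sides equal $-g(Y)$, with the convention $\chi(0)=0$) correctly disposes of the boundary characters. The paper's version avoids all binomial symmetries but must instead argue that the degenerate substitution values ($y_i=-1$, etc.) contribute zero.

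Two pieces of bookkeeping must be nailed down before this is a proof rather than a plan. First, the sign: the flip $\binom{A}{B}=A(-1)B(-1)\binom{\bar B}{\bar A}$ applied to the $i$-th factor (after the shift $\chi\mapsto T^{-a_1}\chi$) contributes $T^{a_i-b_{i-1}}(-1)$ for $i\geq 2$ and $T^{b_n+a_1}(-1)$ for $i=1$, so the accumulated sign is $T^{\sum a_i+\sum b_i}(-1)$; this differs a priori from the stated $T^{m}(-1)$ by $T^{b_n+(n-1)a_1}(-1)$, so you cannot simply assert that the factors ``telescope'' to $m$ --- you must compute and reconcile. Second, and more substantively: the conjugation $\chi\mapsto\bar\chi$ that you need in order to put the distinguished factor into Greene's form $\binom{A_0\chi}{\chi}$ sends $\chi(\lambda^{d})$ to $\chi(\lambda^{-d})$, so your argument lands on ${}_{n}F_{n-1}(\cdots\mid\lambda^{-d})_q$ rather than $\lambda^{d}$. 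This is not a defect peculiar to your method --- the paper's own proof likewise ends with the factor $T^{-(b_n+a_1)}(1-y_1\cdots y_{n-1}\lambda^{-d})$, i.e.\ $x_0=\lambda^{-d}$ in Equation \eqref{eqn:FnDefinition}, and the applications in Section \ref{sec:DworkThreefold} all carry argument $1/\lambda^{5}$ --- but it means the $\lambda^{d}$ in the displayed conclusion is what needs reconciling, not your computation; state explicitly which convention you are proving.
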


\begin{proof}

We start by assuming $a_1\leq\ldots \leq a_n$ and $b_1\leq\ldots \leq b_n$ since the above Gauss sum expression is independent of this ordering. Recalling that $g(\chi)=\sum_x \chi(x)\theta(x)$ we can write

\begin{align*}
&\frac{1}{q-1}\sum_{j=0}^{q-2}\left(\prod_{i=1}^n g(T^{a_i+j})\prod_{i=1}^n T^{-b_i+j}(-1)g(T^{b_i-j})\right) T^{j}(\lambda^d)\\
 &\hspace{1in}=\frac{T^{m'}(-1)}{q-1}\sum_{x_i, y_i\in\mathbb F_q}T^{a_1}(x_1)\cdots T^{a_n}(x_n)T^{b_1}(y_1)\cdots T^{b_n}(y_n)\\
&\hspace{2in}\cdot\theta\left(\sum x_i +\sum y_i\right)\sum_{j=0}^{q-2}T^j\left(\tfrac{(-1)^nx_1\cdots x_n\lambda^d}{y_1\cdots y_n}\right),
\end{align*}
where $x_i,y_i\not=0$ and $m'=-(b_1+\ldots +b_n)$. Note that $\sum T^j\left(\tfrac{(-1)^nx_1\cdots x_n\lambda^d}{y_1\cdots y_n}\right)=q-1$ if $(-1)^nx_1\cdots x_n\lambda^d/y_1\cdots y_n=1$ and equals 0 otherwise. Letting $x_1=\tfrac{(-1)^ny_1\cdots y_n}{x_2\cdots x_n\lambda^d}$ and recalling that $T^{dt}(\lambda)=1$ yields the following expression
\begin{align*}
&T^{m'+a_1}(-1)\sum_{x_i, y_i}T^{a_2-a_1}(x_2)\cdots T^{a_n-a_1}(x_n)T^{b_1+a_1}(y_1)\cdots T^{b_n+a_1}(y_n)\\
&\hspace{.5in}\cdot\theta\left(\tfrac{(-1)^ny_1\cdots y_n}{x_2\cdots x_n\lambda^d} +x_2+\ldots+ x_n+y_1+\ldots+ y_n \right),
\end{align*}
where we sum over all $x_i, y_i$ except $x_1$.\\

Our goal now is to get the above expression in terms of Gauss sums and multiplicative characters. We perform the following changes of variables.
$$y_1\rightarrow y_1x_2,\hspace{.1in} y_2\rightarrow y_2x_3,\hspace{.1in} \ldots \hspace{.1in}, \hspace{.1in}y_{n-1}\rightarrow (-1)^ny_{n-1}x_n\lambda^d.$$
This yields the expression
\begin{align*}
&T^{m'+a_1}(-1)\sum_{x_i, y_i}T^{a_2+b_1}(x_2)\cdots T^{a_n+b_{n-1}}(x_n)T^{b_1+a_1}(y_1)\cdots T^{b_n+a_1}(y_n)\\
&\hspace{.5in}\cdot\theta(y_1\cdots y_n +x_2+\ldots+ x_n+y_1x_2+y_2x_3+\ldots+(-1)^ny_{n-1}x_n\lambda^d+ y_n).
\end{align*}
To further simplify this expression, we rewrite the argument of the additive character $\theta$ and perform another change of variables. Factoring yields
\begin{align*}
&y_1\cdots y_n +x_2+\ldots+ x_n+y_1x_2+y_2x_3+\ldots+ (-1)^ny_{n-1}x_n\lambda^d+ y_n \\
&\hspace{.5in}=y_n(1+ y_1\cdots y_{n-1})+x_2(1+y_1)+x_3(1+y_2)+\ldots +x_n(1+(-1)^ny_{n-1}\lambda^d).
\end{align*}
If any of $y_1, \ldots, y_{n-2}=-1, y_1\cdots y_{n-1}=-1,$ or $y_{n-1}=(-1)^{n+1}/\lambda^d$, then the entire sum is 0. To see this, note that if, for example, $y_1=-1$, then the expression becomes 
\begin{align*}
&T^{m'+a_1}(-1)\sum_{x_i, y_i}T^{a_3+b_1}(x_3)\cdots T^{a_n+b_{n-1}}(x_n)T^{b_1+a_1}(y_1)\cdots T^{b_n+a_1}(y_n)\\
&\hspace{.5in}\cdot\theta(y_n(1+ y_1\cdots y_{n-1})+x_3(1+y_2)+\ldots +x_n(1+(-1)^ny_{n-1}\lambda^d))\cdot\sum_{x_2} T^{a_2+b_1}(x_2),
\end{align*}
and $\sum_{x_2} T^{a_2+b_1}(x_2)=0$ when $a_2+b_1\not=0$.\\

For all other values, we perform the following changes of variables.
$$y_n\rightarrow y_n/(1+ y_1\cdots y_{n-1}), \hspace{.1in} x_2\rightarrow x_2/(1+y_1),\hspace{.1in} \ldots ,\hspace{.1in} x_{n}\rightarrow x_n/(1+ (-1)^ny_{n-1}\lambda^d).$$
This yields the following expression.
\begin{align*}
&T^{m'+a_1}(-1)\sum_{x_i, y_i}T^{a_2+b_1}(x_2)\cdots T^{a_n+b_{n-1}}(x_n)T^{b_n+a_1}(y_n)\cdot\theta(y_n+ x_2+ \ldots +x_n)\\
&\hspace{.5in}T^{b_1+a_1}(y_1)T^{-(a_2+b_1)}(1+y_1)T^{b_2+a_1}(y_2)T^{-(a_3+b_2)}(1+y_2)\\
&\hspace{.5in}\cdots T^{b_{n-1}+a_1}(y_{n-1})T^{-(a_n+b_{n-1})}(1+(-1)^ny_{n-1}\lambda^d)T^{-(b_n+a_1)}(1+ y_1\cdots y_{n-1}).
\end{align*}

Note that the summand equals 0 whenever $y_1, \ldots, y_{n-2}=-1$, $y_1\cdots y_{n-1}=-1,$ or ${y_{n-1}=(-1)^{n+1}/\lambda^d}$, so we can include those values back in the sum. The first part of this summand becomes a product of Gauss sums:

\begin{align*}
G&:=    \sum_{x_i, y_n}T^{a_2+b_1}(x_2)\cdots T^{a_n+b_{n-1}}(x_n)T^{b_n+a_1}(y_n)\cdot\theta(y_n+ x_2+ \ldots +x_n)\\
&=g(T^{a_2+b_1})\cdots g(T^{a_n+b_{n-1}})g(T^{b_n+a_1}).
\end{align*}
So, we write our expression as
\begin{align*}
&T^{m'+a_1}(-1)G\sum_{y_i}T^{b_1+a_1}(y_1)T^{-(a_2+b_1)}(1+y_1)T^{b_2+a_1}(y_2)T^{-(a_3+b_2)}(1+y_2)\\
&\hspace{.5in}\cdots T^{b_{n-1}+a_1}(y_{n-1})T^{-(a_n+b_{n-1})}(1+(-1)^ny_{n-1}\lambda^d)T^{-(b_n+a_1)}(1+ y_1\cdots y_{n-1}).
\end{align*}

In order to get the remaining expression to match Equation \ref{eqn:FnDefinition} We need to perform more changes of variables. First, let $y_{n-1}\rightarrow (-1)^ny_{n-1}/\lambda^d$ to get 
\begin{align*}
&T^{m'+a_1}(-1)G\sum_{y_i}T^{b_1+a_1}(y_1)T^{-(a_2+b_1)}(1+y_1)T^{b_2+a_1}(y_2)T^{-(a_3+b_2)}(1+y_2)\\
&\hspace{.5in}\cdots T^{b_{n-1}+a_1}(y_{n-1})T^{-(a_n+b_{n-1})}(1+y_{n-1})T^{-(b_n+a_1)}(1+ (-1)^ny_1\cdots y_{n-1}\lambda^{-d}).
\end{align*}

We now let $y_i\rightarrow -y_i$ for all $i$ and, noting that $(-1)^{n-1}(-1)^n=-1$, get
\begin{align*}
&T^{m''}(-1) G\sum_{y_i}T^{b_1+a_1}(y_1)T^{-(a_2+b_1)}(1-y_1)T^{b_2+a_1}(y_2)T^{-(a_3+b_2)}(1-y_2)\\
&\hspace{.5in}\cdots T^{b_{n-1}+a_1}(y_{n-1})T^{-(a_n+b_{n-1})}(1-y_{n-1})T^{-(b_n+a_1)}(1 -y_1\cdots y_{n-1}\lambda^{-d}),
\end{align*}
where
\begin{align*}
    m''&=m'+a_1+(b_1+a_1)+(b_2+a_1)+\ldots+(b_{n-1}+a_1)\\
     &=na_1.
\end{align*}

Finally, applying Equation \ref{eqn:FnDefinition} to this yields
\begin{align*}
    T^{m}(-1)G\cdot q^{n-1}{}_{n}F_{n-1}\left(\left.\begin{array}{cccc}
                T^{b_n+a_1}& T^{b_1+a_1} &\ldots &T^{b_{n-1}+a_1}\\
		{} &T^{a_1-a_2}&\ldots&T^{a_1-a_n}              \end{array}\right|\lambda^d\right)_q,
\end{align*}
where 
\begin{align*}
 m&=na_1-((b_1+a_1)+\ldots+(b_{n-1}+a_1)+(a_1-a_2)+\ldots+(a_1-a_n))   \\
  &=\sum_1^n a_i -\sum_1^{n-1} (b_i +a_1).
\end{align*}

\end{proof}

\section{Hypergeometric Point Count Formula}\label{sec:GeneralHypFormula}

Our main interest in Theorem \ref{thm:GaussSumIDFn} is that it can used to simplify Gauss sum expressions in the point count formula for Dwork hypersurfaces. We start by recalling Koblitz's formula given in \cite{KoblitzHypersurface}.\\

Let $W$ be the set of all $d$-tuples $w=(w_1,\ldots,w_d)$ satisfying $0\leq w_i<d$ and $\sum w_i\equiv 0 \pmod d$. We denote the points on the diagonal hypersurface
$$x_1^d+\ldots+x_d^d=0$$
by $N_q(0):=\sum N_{q}(0,w)$, where
$$ N_{q}(0,w)=
 \begin{cases}
  0 &\text{if some but not all } w_i=0,\\
  \frac{q^{d-1}-1}{q-1} &\text{if all } w_i=0,\\
  -\frac1q J\left(T^{\tfrac{w_1}{d}},\ldots,T^{\tfrac{w_d}{d}}\right) &\text{if all } w_i\not=0.\\
 \end{cases}
$$

Letting $W^{**}$ be set of all $d-$tuples where no $w_i=0$, we can write
\begin{equation}\label{eqn:Nq0}
N_{q}(0)=\frac{q^{d-1}-1}{q-1} + \frac1q\sum_{w\in W^{**}} \prod_i g(T^{w_it}).
\end{equation}

As in \cite{GoodsonDwork2017}, we consider cosets of $W$ with respect to the equivalence relation $\sim$ on $W$ defined by $w\sim w'$ if $w-w'$ is a multiple of $(1,\ldots,1)$; we denote this set $W/\sim$ by $W^*$. In the case where $d=4$, we showed in \cite{GoodsonDwork2017} that there were three cosets and their permutations. For general $d$, we should expect many more cosets. We discuss the format of these cosets in Section \ref{sec:TypesofTerms}.\\

Koblitz's formula in this general case is as follows.
$$\#X_{\lambda}^d(\mathbb F_q)=N_q(0)+\frac1{q-1}\sum\frac{\prod_{i=1}^dg\left(T^{w_it+j}\right)}{g(T^{dj})}T^{dj}(d\lambda)$$
where the sum is taken over $j\in\{0,\ldots,q-2\}$ and $w\in W^*$.\\

In \cite[Theorem 8.1]{GoodsonDwork2017}, we gave a partial breakdown of Koblitz's formula and showed that the point count could be given at least partially in terms of hypergeometric functions. We now work to improve on this theorem by rewriting the final summand that appears in the formula.

\begin{prop}\label{prop:KoblitzBreakDown}
Let $q$ be a prime power such that $q\equiv 1\pmod d$, $t=\frac{q-1}{d}$, and $T$ be a generator for $\widehat{\mathbb F_q^{\times}}$. Then, for each $w\in W^*$ and for $\lambda^d\not=1$,
$$\frac{1}{q-1}\sum_{j=0}^{q-2}\frac{\prod_{i=1}^dg\left(T^{w_it+j}\right)}{g(T^{dj})}T^{dj}(d\lambda)$$
can be expressed as a finite field hypergeometric function plus a Gauss sum expression.
\end{prop}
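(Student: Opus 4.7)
The plan is to reshape the summand into the form required by Theorem \ref{thm:GaussSumIDFn} and then apply that theorem. The key substitution is Corollary \ref{cor:HasseDavenportGeneral}, taken with $m=d$, $\chi=T^t$, and $\psi=T^j$, which rewrites
\[
\frac{1}{g(T^{dj})}=\frac{T^{-dj}(d)\prod_{i=1}^{d-1}g(T^{it})}{\prod_{i=0}^{d-1} g(T^{it+j})}.
\]
Plugging this into the sum, combining $T^{-dj}(d)$ with $T^{dj}(d\lambda)$ to leave $T^{dj}(\lambda)=T^{j}(\lambda^d)$, and pulling the $j$-independent factor $\prod_{i=1}^{d-1}g(T^{it})$ outside, I obtain
\[
\frac{\prod_{i=1}^{d-1}g(T^{it})}{q-1}\sum_{j=0}^{q-2}\frac{\prod_{i=1}^{d} g(T^{w_it+j})}{\prod_{i=0}^{d-1} g(T^{it+j})}\,T^{j}(\lambda^d).
\]

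Next I cancel common Gauss sum factors between numerator and denominator. Writing $m_c$ for the multiplicity of $c\in\{0,\dots,d-1\}$ in the multiset $\{w_1,\dots,w_d\}$, there are $n:=\sum_c\max(m_c-1,0)$ leftover numerator factors indexed by the repeated values $\alpha_1,\dots,\alpha_n$ and exactly $n$ leftover denominator factors indexed by the missing values $\beta_1,\dots,\beta_n\in\{0,\dots,d-1\}\setminus\{w_i\}$. For each leftover denominator factor $g(T^{\beta_\ell t+j})$ I apply the identity $g(\chi)^{-1}=g(\overline{\chi})/(\chi(-1)q)$, flipping it into the numerator as $g(T^{-\beta_\ell t-j})$ up to an explicit scalar. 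Setting $a_k=\alpha_k t$ and $b_\ell=-\beta_\ell t$, the resulting sum matches the hypothesis of Theorem \ref{thm:GaussSumIDFn}: the condition $a_k\ne -b_\ell$ holds because each $\alpha_k$ is realized in $\{w_i\}$ while each $\beta_\ell$ is not, and the $b_\ell$ are distinct because the $\beta_\ell$ are. Applying Theorem \ref{thm:GaussSumIDFn} then produces a single ${}_nF_{n-1}$ hypergeometric function in $\lambda^d$, multiplied by an explicit Gauss sum prefactor.

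The main obstacle is the finitely many values $j\equiv -\beta_\ell t\pmod{q-1}$ for which the reciprocation is invalid, since at those $j$ one has $g(T^{\beta_\ell t+j})=g(\epsilon)=-1$ and the identity $g(\chi)g(\overline{\chi})=\chi(-1)q$ breaks down. My plan is to separate these exceptional $j$'s from the outset, evaluate their contributions directly as products of Gauss sums of fixed characters (no longer depending on $j$), and verify that on the complementary set the hypotheses of Theorem \ref{thm:GaussSumIDFn} hold uniformly. The hypergeometric function arises from the main sum, and the collection of exceptional-$j$ summands assembles into the ``Gauss sum expression'' promised in the conclusion. Careful bookkeeping of the reciprocation scalars and of these boundary contributions is the principal technical burden of the proof.
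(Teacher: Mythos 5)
Your proposal follows essentially the same route as the paper's proof: apply the Hasse--Davenport corollary to $g(T^{dj})$, cancel common factors, flip the leftover denominator Gauss sums via $g(\chi)g(\overline{\chi})=\chi(-1)q$, and invoke Theorem \ref{thm:GaussSumIDFn}, with the exceptional $j\equiv-\beta_\ell t\pmod{q-1}$ producing the residual Gauss sum expression. The one point to watch is that Theorem \ref{thm:GaussSumIDFn} requires the sum over \emph{all} $j\in\{0,\dots,q-2\}$, so after isolating the exceptional terms you must add their flipped values back into the main sum (and subtract them again) before applying the theorem --- exactly the bookkeeping you flag as the principal technical burden, and exactly what the paper does.
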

\begin{rem*}
In Section \ref{sec:GaussSumCancel} we will show that the extra Gauss sum expression cancels with the one that appears in the formula for $N_q(0)$. Thus, we obtain a point count formula that is given solely by $(q^{d-1}-1)/(q-1)$ plus a sum of hypergeometric functions.
\end{rem*}

\begin{proof}
We start by assuming that $w_i=0$ for at least one $i$. Note that this will not restrict our use of the proposition since, for any coset $[w]$, we can always choose a representative with at least one 0. We use the specialization of the Hasse-Davenport formula given in Corollary \ref{cor:HasseDavenportGeneral}, and then cancel all common factors, to obtain

\begin{align*}
    \sum_{j=0}^{q-2}\frac{g(T^{w_1t+j})\cdots g(T^{w_dt+j})}{g(T^{dj})}T^{dj}(d\lambda) &=\prod_{k=1}^{d-1}g(T^{kt})\sum_{j=0}^{q-2}\frac{g(T^{w_1t+j})\cdots g(T^{w_dt+j})}{g(T^{j})g(T^{t+j})\cdots g(T^{(d-1)t +j})}T^{dj}(\lambda)\\
    &=\prod_{k=1}^{d-1}g(T^{kt})\sum_{j=0}^{q-2}\frac{g(T^{a_1t+j})\cdots g(T^{a_nt+j})}{g(T^{b_1t+j})\cdots g(T^{b_nt +j})}T^{dj}(\lambda),
\end{align*}
where $n<d$. When $d$ is odd, 
\begin{align*}
    \prod_{k=1}^{d-1}g(T^{kt})&= q^{\frac{d-1}{2}}T^\alpha(-1),
\end{align*}
where $\alpha = \sum_1^{(d-1)/2)}it=\frac{(d-1)(d+1)}{8}t$. On the other hand, when $d$ is even, the Gauss sum factors do not perfectly pair up. In this case we get
\begin{align*}
    \prod_{k=1}^{d-1}g(T^{kt})&= q^{\frac{d-2}{2}}g(T^{\frac{d}{2}t})T^{\alpha'}(-1),
\end{align*}
where $\alpha' = \sum_1^{(d-2)/2)}it=\frac{(d-2)(d)}{8}t$. In both cases we will denote this quantity by $G_d$ with the understanding that its value depends on the parity of $d$.\\

We can order terms in the Gauss sum expression so that $0\leq a_1\leq \ldots \leq a_n$ and $0<b_1<\ldots <b_n$. The inequalities on the $b_i$ are strict because the factors in the denominator were distinct and, since $w_i=0$ for some $i$, $b_1\not =0$. Note that $b_i\not= a_j$ for all $i,j$ because otherwise the corresponding factors would have canceled.\\

Next, we would like to rewrite this using the relation
$$g(T^{b_it+j})g(T^{-b_it-j})=T^{b_it+j}(-1)q,$$
but we must first remove all $j=(d-b_i)t$ from the summand since the above relation holds only when $T^{b_it+j}\not= \epsilon$. Note that for each $j=(d-b_i)t$, we have
\begin{align*}
    \frac{g(T^{a_1t+j})\cdots g(T^{a_nt+j})}{g(T^{b_1t+j})\cdots g(T^{b_nt +j})}T^{dj}(\lambda)&=\frac{g(T^{(a_1-b_i)t})\cdots g(T^{(a_n-b_i)t})}{g(T^{(b_1-b_i)t})\cdots g(T^{(b_i-b_i)t})\cdots g(T^{(b_n-b_i)t}) }\\ 
    &=-\frac{g(T^{(a_1-b_i)t})\cdots g(T^{(a_n-b_i)t})}{g(T^{(b_1-b_i)t})\cdots \widehat{g(T^{(b_i-b_i)t})}\cdots g(T^{(b_n-b_i)t}) } \\
    &=-\frac{T^{m'}(-1)}{q^{n-1}}g(T^{(a_1-b_i)t})\cdots g(T^{(a_n-b_i)t}) \\ 
    &\hspace{.5in}\cdot g(T^{(-b_1+b_i)t})\cdots \widehat{g(T^{(-b_i+b_i)t})}\cdots g(T^{(-b_n+b_i)t}), 
\end{align*}
where $m'=\sum_{k\not= i}b_kt -(n-1)b_it=\sum_{k=1}^n b_kt -nb_it$.\\

For the remaining terms we can write
\begin{align*}
    \sum_{j\not=-b_it}\frac{g(T^{a_1t+j})\cdots g(T^{a_nt+j})}{g(T^{b_1t+j})\cdots g(T^{b_nt +j})}T^{dj}(\lambda)&=\sum_{j\not=-b_it}\frac{T^{m''}(-1)}{q^{n}}g(T^{a_1t+j})\cdots g(T^{a_nt+j})\\ 
    &\hspace{.5in}\cdot g(T^{-b_1t-j})\cdots g(T^{-b_nt-j})T^{dj}(\lambda), 
\end{align*}
where $m''=\sum_{k=1}^n b_kt +nj$.\\

Note that if $j=-b_it$, then $m''=\sum_{k=1}^n b_kt +nj= \sum_{k=1}^n b_kt -nb_it= m'$ and
\begin{align*}
    \frac{T^{m''}(-1)}{q^{n}}&g(T^{a_1t+j})\cdots g(T^{a_nt+j}) g(T^{-b_1t-j})\cdots g(T^{-b_nt-j})T^{dj}(d\lambda)\\
    &=\frac{T^{m'}(-1)}{q^{n}}g(T^{(a_1-b_i)t})\cdots g(T^{(a_n-b_i)t})g(T^{(-b_1+b_i)t})\cdots g(T^{(-b_i+b_i)t})\cdots g(T^{(-b_n+b_i)t})\\
    &=-\frac{T^{m'}(-1)}{q^{n}}g(T^{(a_1-b_i)t})\cdots g(T^{(a_n-b_i)t}) g(T^{(-b_1+b_i)t})\cdots \widehat{g(T^{(-b_i+b_i)t})}\cdots g(T^{(-b_n+b_i)t}).
\end{align*}

Putting this all together yields the following.
\begin{align*}
    \sum_{j=0}^{q-2}\frac{g(T^{a_1t+j})\cdots g(T^{a_nt+j})}{g(T^{b_1t+j})\cdots g(T^{b_nt +j})}T^{dj}(\lambda)&= \sum_{j=0}^{q-2}\frac{T^{m''}(-1)}{q^{n}}\left(\prod_{i=1}^n g(T^{a_it+j})\cdot \prod_{i=1}^n g(T^{-b_it-j})\right)T^{dj}(\lambda)\\
    &\hspace{.4in}+\left(-\sum_{i=1}^n\frac{T^{m'}(-1)}{q^{n-1}}\prod_{k=1}^ng(T^{(a_k-b_i)t})\cdot\prod_{k\not=i}^n g(T^{(-b_k+b_i)t})\right)\\
     &\hspace{.4in}-\left(-\sum_{i=1}^n\frac{T^{m'}(-1)}{q^{n}}\prod_{k=1}^ng(T^{(a_k-b_i)t})\cdot\prod_{k\not=i}^n g(T^{(-b_k+b_i)t})\right)\\
     &= \sum_{j=0}^{q-2}\frac{T^{m''}(-1)}{q^{n}}\left(\prod_{i=1}^n g(T^{a_it+j})\cdot \prod_{i=1}^n g(T^{-b_it-j})\right)T^{dj}(\lambda)\\
    &\hspace{.4in}-\frac{q-1}{q^n}\sum_{i=1}^nT^{m'}(-1)\prod_{k}g(T^{(a_k-b_i)t})\cdot\prod_{k\not=i} g(T^{(-b_k+b_i)t})\\
     &=\frac{1}{q^{n}} \sum_{j=0}^{q-2}\left(\prod_{i=1}^n g(T^{a_it+j})\cdot \prod_{i=1}^n T^{b_it+j}(-1)g(T^{-b_it-j})\right)T^{j}(\lambda^d)\\
    &\hspace{.4in}-\frac{q-1}{q^{n}}\sum_{i=1}^n\prod_{k=1}^ng(T^{(a_k-b_i)t})\cdot\prod_{k\not=i} T^{(b_k-b_i)t}(-1)g(T^{(-b_k+b_i)t}).
\end{align*}

Theorem \ref{thm:GaussSumIDFn} tells us that 
\begin{align*}
    &\sum_{j=0}^{q-2}\left(\prod_{i=1}^n g(T^{a_it+j})\cdot \prod_{i=1}^n T^{b_it+j}(-1)g(T^{-b_it-j})\right)T^{j}(\lambda^d)\\
    &\hspace{.6in}=T^{m}(-1)G\cdot q^{n-1}(q-1){}_{n}F_{n-1}\left(\left.\begin{array}{cccc}
                T^{(a_1-b_n)t}& T^{(a_1-b_1)t} &\ldots &T^{(a_1-b_{n-1})t}\\
		{} &T^{(a_1-a_2)t}&\ldots&T^{(a_1-a_n)t}              \end{array}\right|\lambda^d\right)_q,
\end{align*}
where $m=\sum_1^n a_it -\sum_1^{n-1} (a_1-b_i)t$ and $G=g(T^{(a_2-b_1)t})\cdots g(T^{(a_n-b_{n-1})t})g(T^{(a_1-b_n)t})$.\\

Putting all of this together yields
\begin{align*}
 \frac{1}{q-1}\sum_{j=0}^{q-2}&\frac{\prod_{i=1}^dg\left(T^{w_it+j}\right)}{g(T^{dj})}T^{dj}(d\lambda) \\
 &= \frac{G_d}{q-1}\left(\frac{1}{q^{n}} \sum_{j=0}^{q-2}\left(\prod_{i=1}^n g(T^{a_it+j})\cdot \prod_{i=1}^n T^{b_it+j}(-1)g(T^{-b_it-j})\right)T^{j}(\lambda^d)\right.\\
    &\hspace{.5in}\left.-\frac{q-1}{q^{n}}\sum_{i=1}^n\prod_{k=1}^ng(T^{(a_k-b_i)t})\cdot\prod_{k\not=i} T^{(b_k-b_i)t}(-1)g(T^{(-b_k+b_i)t}) \right)\\
    &=\frac{T^{m}(-1)G\cdot G_d}{q}{}_{n}F_{n-1}\left(\left.\begin{array}{cccc}
                T^{(a_1-b_n)t}& T^{(a_1-b_1)t} &\ldots &T^{(a_1-b_{n-1})t}\\
		{} &T^{(a_1-a_2)t}&\ldots&T^{(a_1-a_n)t}              \end{array}\right|\lambda^d\right)_q\\
    &\hspace{.5in}-\frac{G_d}{q^{n}}\sum_{i=1}^n\prod_{k=1}^ng(T^{(a_k-b_i)t})\cdot\prod_{k\not=i}^n T^{(b_k-b_i)t}(-1)g(T^{(-b_k+b_i)t}) \\
\end{align*}

We can clean this up slightly by noting that when $d$ is odd
\begin{align*}
  \frac{T^{m}(-1) G_d}{q} &=T^{m+(d-1)(d+1)/8}(-1)\cdot q^{(d-1)/2-1},
\end{align*}
and when $d$ is even
\begin{align*}
  \frac{T^{m}(-1) G_d}{q} &=T^{m+(d-2)(d)/8}(-1)\cdot q^{(d-2)/2-1}\cdot g(T^{\frac{d}{2}t}).
\end{align*}

In both cases we will denote this quantity by $G_d'$ with the understanding that its value partially depends on the parity of $d$. Hence, we can write that
\begin{align*}
 \frac{1}{q-1}\sum_{j=0}^{q-2}\frac{\prod_{i=1}^dg\left(T^{w_it+j}\right)}{g(T^{dj})}&T^{dj}(d\lambda)\\
    &=G\cdot G_d'\cdot{}_{n}F_{n-1}\left(\left.\begin{array}{cccc}
                T^{(a_1-b_n)t}& T^{(a_1-b_1)t} &\ldots &T^{(a_1-b_{n-1})t}\\
		{} &T^{(a_1-a_2)t}&\ldots&T^{(a_1-a_n)t}              \end{array}\right|\lambda^d\right)_q\\
    &\hspace{.5in}-\frac{G_d}{q^{n}}\sum_{i=1}^n\prod_{k=1}^ng(T^{(a_k-b_i)t})\cdot\prod_{k\not=i}^n T^{(b_k-b_i)t}(-1)g(T^{(-b_k+b_i)t}). \\
\end{align*}

\end{proof}

\subsection{Canceling Gauss Sum Expressions}\label{sec:GaussSumCancel}
At the end of the proof of Proposition \ref{prop:KoblitzBreakDown}, we obtain a finite field hypergeometric function minus the Gauss sum expression
\begin{equation}\label{eqn:ExtraGauss}
\frac{G_d}{q^{n}}\sum_{i=1}^n\prod_{k=1}^ng(T^{(a_k-b_i)t})\cdot\prod_{k\not=i}^n T^{(b_k-b_i)t}(-1)g(T^{(-b_k+b_i)t}),
\end{equation}
where 
$$G_d=\begin{cases}
q^{\frac{d-1}{2}}T^{(d-1)(d+1)t/8}(-1) & \text{if } d \text{ is odd},\\
q^{\frac{d-2}{2}}g(T^{\frac{d}{2}t})T^{(d-2)(d)t/8}(-1) & \text{if } d \text{ is even}.
\end{cases}$$

In Equation \ref{eqn:Nq0} we defined the quantity $N_q(0)$, which makes up part of the overall point count for the $(d-2)$-dimensional Dwork hypersurface. The quantity is given by  $(q^{d-1}-1)/(q-1)$ plus
\begin{equation}\label{eqn:W**}
\frac1q\sum_{w\in W^{**}} \prod_i g(T^{w_it}),
\end{equation}
where $W$ is the set of all $d$-tuples $w=(w_1,\ldots,w_d)$ satisfying $0\leq w_i <d$ and $\sum w_i \equiv 0 \pmod d$ and $W^{**}$ is the set of all $w=(w_1,\ldots,w_d)$ with no $w_i=0$. Our aim in this section is to show that the Gauss sum expression in Eqaution \ref{eqn:W**} will negate all of the terms of the form in Equation \ref{eqn:ExtraGauss}. In doing so, we prove that the number of points on any Dwork hypersurface can be expressed as a sum of finite field hypergeometric functions plus the quantity $(q^{d-1}-1)/(q-1)$. \\

By definition, the following are true of the numbers appearing in Equation \ref{eqn:ExtraGauss}.
\begin{align}\label{RulesForExponents}
    &b_j\not=0, \text{ for all } j \nonumber \\
    &a_k\not=b_j, \text{ for all } k,j, \text{ and} \\
    &\text{The } b_j \text{ are distinct}.\nonumber
\end{align}

This implies that no factor in the product equals $g(T^0)$. Furthermore, the sum of the exponents in each of the Gauss sums is congruent to $0 \pmod {q-1}$. Fixing $i$, we see that
\begin{align*}
    \sum_{k=1}^n(a_k-b_i)t + \sum_{k=1, k\not=i}^n(-b_k+b_i)t&= t\left(\sum_{k=1}^n a_k -nb_i -\sum_{k=1}^n b_k +nb_i\right)\\
    &=t\left(\sum_{k=1}^n a_k -\sum_{k=1}^n b_k\right).
\end{align*}
We can write that
\begin{equation*}
    \sum_{k=1}^n b_k = \sum_{j=0}^{d-1} j - \sum a_j'= \frac{(d-1)d}{2} - \sum a_j',
\end{equation*}
where the $a_j'$ are the common factors that were canceled from the numerator and denominator in the original Gauss sum expression at the beginning of the proof of Proposition \ref{prop:KoblitzBreakDown}. Thus, the sum of the exponents is
\begin{align*}
    t\left(\sum_{k=1}^n a_k -\sum_{k=1}^n b_k\right)&=t\left(\sum_{k=1}^n a_k + \sum a_j' -\frac{(d-1)d}{2}\right) \\
    &= \sum_{j=1}^d w_jt -\frac{(d-1)dt}{2}.
\end{align*}

We will now split into two cases: $d$ odd and $d$ even. Our work with $d$ odd leads to a proof of Theorem \ref{thm:DworkHypOdd}. Our work with $d$ even gives progress toward a proof of Conjecture \ref{conjec:DworkHypEven}.

\subsubsection{Proof of Theorem \ref{thm:DworkHypOdd}}\label{subsec:d_odd}

Note that, by definition, $\sum_{j=1}^d w_j\equiv 0 \pmod d$. When $d$ is odd, $(d-1)$ is even and, hence, $\sum_{j=1}^d w_j -\frac{(d-1)d}{2}$ is a multiple of $d$. Hence, the sum of the exponents will be congruent to $dt$, which is congruent to $0 \pmod{q-1}$. Note that this is also true of the expressions in Equation \ref{eqn:W**}.\\

The Gauss sum expression in Equation \ref{eqn:ExtraGauss} has $(2n-1)$ factors. If $2n-1 =d$ then
$$\prod_{k=1}^ng(T^{(a_k-b_i)t})\cdot\prod_{k\not=i}^n g(T^{(-b_k+b_i)t}) = \prod_{i=1}^d g(T^{w_it}),$$
for some $w=(w_1,\ldots,w_d)$ in $W^{**}$.\\ 

If instead we have $2n-1<d$, then there exist $v_1,\ldots v_l$, with $0<v_j<d$, that complete the product, i.e. 
$$\prod_{k=1}^ng(T^{(a_k-b_i)t})\cdot\prod_{k\not=i}^n g(T^{(-b_k+b_i)t}) \cdot \prod_{k=1}^l g(T^{v_k})g(T^{d-v_k})= \prod_{i=1}^d g(T^{w_it}),$$
for some $w=(w_1,\ldots,w_d)$ in $W^{**}$ by the properties listed in (\ref{RulesForExponents}).\\

Now suppose $2n-1>d$, i.e. there are too many factors in the Gauss sum expression of Equation \ref{eqn:ExtraGauss}. We will show that we can find enough factor pairs of the form $g(T^{vt})g(T^{(d-v)t})=qT^{vt}(-1)$ to reduce the number of factors to $d$.\\

Since $2n-1>d$, we can write that $n-1=\frac{d+m}{2}$, for some $m\geq1$. Recall that the exponents in the expression
$$\prod_{k\not=i}^n g(T^{(-b_k+b_i)t})$$
are distinct because each $b_j$ is distinct. It is possible to have a maximum of $\frac{d-1}{2}$ numbers in the list of these exponents without having any pairs of the form $i, d-i$. These pairs will yield $g(T^{it})g(T^{(d-i)t})=qT^i(-1)$ in our Gauss sum expression, which reduces the overall number of factors. There will be $\frac{d+m}{2}-\frac{d-1}{2}= \frac{m+1}{2}$ pairs. This will leave us with $d$ factors in the Gauss sum expression since
$$2n-1- 2\left(\frac{d+m}{2}-\frac{d-1}{2}\right)=(d+m+2)-1 -(m+1)=d.$$ 

Thus,
$$\prod_{k=1}^ng(T^{(a_k-b_i)t})\cdot{\prod_k}' g(T^{(-b_k+b_i)t})= \prod_{i=1}^d g(T^{w_it}),$$
for some $w=(w_1,\ldots,w_d)$ in $W^{**}$ and for some subset of $\{-b_k+b_i\}_{k=1}^{n}$.\\

We now compare the size of $W^{**}$ to the number of expressions of the form in Equation \ref{eqn:ExtraGauss} that appear in the point count formula. Let $[w]\in W^*=W/\sim$ and let $N_w=\#\{w'\in[w]: w' \text{ contains no zeros}\}$. Note that $N_w$ corresponds to the number of distinct numbers in the $d$-tuple $w=(w_1,\ldots,w_d)$ in the following way. If $s$ is the number of distinct numbers in $w$, then $N_w=d-s$. For example, if $d=5$ and $w=(0,0,0,2,3)$, then $N=5-3=2$ and these coset elements are
$$(1,1,1,3,4) \text{ and } (4,4,4,1,2).$$
All other elements in the coset $[(0,0,0,2,3)]$ contain at least one zero. Furthermore, $\sum_{[w]}N_w$ gives the total number of elements in $W^{**}$. \\

We now consider the number of expressions of the form in Equation \ref{eqn:ExtraGauss} that we obtain. For each coset representative $[w]$, we obtain the sum
\begin{equation*}
    \frac{G_d}{q^{n}}\sum_{i=1}^n\prod_{k=1}^ng(T^{(a_k-b_i)t})\cdot\prod_{k\not=i}^n T^{(b_k-b_i)t}(-1)g(T^{(-b_k+b_i)t}),
\end{equation*}
where $n$ corresponds to the number of terms left after canceling common factors in the numerator and denominator. Hence, $n$ also equals $d-s$. Adding these values for each $[w]$ yields the total number of expressions of the form in Equation \ref{eqn:ExtraGauss}.\\

Thus, what we have shown is that there is a matching of expressions in our two calculations and that the size of $W^{**}$ equals the number of expressions of the form in Equation \ref{eqn:ExtraGauss} that appear on the point count formula. Our final task is to show that the coefficients of the matching Gauss sum expressions are equal. This entails checking that the powers of $q$ match and that the character evaluation of $-1$ in Equation \ref{eqn:ExtraGauss} matches that of Equation \ref{eqn:W**}.\\

First note that in Equation \ref{eqn:ExtraGauss}, the exponent for character evaluation $T(-1)$ is
$$\frac{(d-1)(d+1)t}{8} +\sum_{k=1}^n b_kt-nb_it.$$

Since $T^t$ is a character of order $d$ and $d$ is odd, we have
$$T^t(-1)=T^{dt}(-1)=1.$$

We now examine the power of $q$ in Equation \ref{eqn:ExtraGauss}. To start, we have 
$$\frac{q^{(d-1)/2}}{q^n}\cdot \left[(2n-1) \text{ Gauss sums}\right].$$
Note that if $2n-1=d$, i.e. $n=\frac{d+1}{2}$, then we are left with a coefficient of $\frac1q$. Furthermore, we showed above that we can always make this a product of exactly $d$ Gauss sums by canceling out (if $2n-1>d$) or multiplying (if $2n-1<d$) pairs of the form $g(T^{it})g(T^{(d-i)t})$. For each of these pairs, we introduce (if $2n-1>d$) or remove (if $2n-1<d$) a factor of $q$. Regardless of which case we are in, the exponent of $q$ becomes
$$\frac{d-1}{2} - n +\frac{2n-1-d}{2}= -1.$$

Thus, Equation \ref{eqn:ExtraGauss} can be rewritten as
\begin{align*}
\frac{G_d}{q^{n}}\sum_{i=1}^n\prod_{k=1}^ng(T^{(a_k-b_i)t})&\cdot\prod_{k\not=i}^n T^{(b_k-b_i)t}(-1)g(T^{(-b_k+b_i)t})\\
&=\prod_{k=1}^ng(T^{(a_k-b_i)t})\cdot\prod_{k\not=i}^n g(T^{(-b_k+b_i)t}) = \frac{1}{q}\prod_{j=1}^d g(T^{w_it})\end{align*}
where $w=(w_1,\ldots,w_d)$ has no $w_j=0$ and satisfies $\sum w_j\equiv 0 \pmod d$. Since the coefficient of each of these expressions in the point count formula of Proposition \ref{prop:KoblitzBreakDown} is $-1$, the sum of all of these terms will negate the extra Gauss sum expression shown in Equation \ref{eqn:W**} that appears in the point count formula. Hence, the number of points on any Dwork hypersurface can be expressed as a sum of finite field hypergeometric functions plus the quantity $(q^{d-1}-1)/(q-1)$.

\subsubsection{Progress on Conjecture \ref{conjec:DworkHypEven}.}\label{subsec:d_even}
The case where $d$ is even is similar, though slightly more complicated. The main obstruction to a complete result is that, unlike in the case when $d$ is odd, $T^t(-1)$ does not necessarily equal 1. \\

Recall that when $d$ is even, the Gauss sum expression in Equation \ref{eqn:ExtraGauss} has a factor of $g(T^{\frac{d}{2}t})$. Thus, the exponent sum we consider in this case is
\begin{align*}
    \sum_{j=1}^d w_j -\frac{(d-1)d}{2} +\frac{d}{2}&= \sum_{j=1}^d w_j -\frac{(d-2)d}{2}\\
    &\equiv 0 \pmod d.
\end{align*}

Hence, the sum of the exponents will be congruent to $dt$, which is congruent to 0 modulo ${q-1}$. Note that this is also true of the expressions in Equation \ref{eqn:W**}.\\

The Gauss sum expression in Equation \ref{eqn:ExtraGauss} has $2n$ factors when $d$ is even, one of which is always $g(T^{\frac{d}{2}t})$. If $2n=d$ then
$$g(T^{\frac{d}{2}t})\prod_{k=1}^ng(T^{(a_k-b_i)t})\cdot\prod_{k\not=i}^n g(T^{(-b_k+b_i)t}) = \prod_{i=1}^d g(T^{w_it}),$$
for some $w=(w_1,\ldots,w_d)$ in $W^{**}$.\\ 

If instead we have $2n<d$, then there exist $v_1,\ldots, v_l$, with $0<v_j<d$, that complete the product, i.e. 
$$g(T^{\frac{d}{2}t})\prod_{k=1}^ng(T^{(a_k-b_i)t})\cdot\prod_{k\not=i}^n g(T^{(-b_k+b_i)t}) \cdot \prod_{k=1}^l g(T^{v_k})g(T^{d-v_k})= \prod_{i=1}^d g(T^{w_it}),$$
for some $w=(w_1,\ldots,w_d)$ in $W^{**}$.\\

Now suppose $2n>d$, i.e. there are too many factors in the Gauss sum expression of Equation \ref{eqn:ExtraGauss}. We will show that we can find enough factor pairs of the form $g(T^{vt})g(T^{(d-v)t})=qT^{vt}(-1)$ to reduce the number of factors to $d$.\\

Since $2n>d$, we can write that $n-1=\frac{d+m}{2}$, for some $m\geq0$. Recall that the exponents in the expression
$$\prod_{k\not=i}^n g(T^{(-b_k+b_i)t})$$
are distinct because each $b_j$ is distinct. It is possible to have $\frac{d-2}{2}$ numbers in the list of exponents without having any pairs of the form $i, d-i$. Since there are more terms than this, there will be $\frac{d+m}{2}-\frac{d-2}{2}= \frac{m+2}{2}$ pairs. This includes the possibility that one of the terms is $\frac{d}{2}$, which yields a Gauss sum expression that cancels with the other $g(T^{\frac{d}{2}t})$ factor that is always there. This will leave us with $d$ factors in the Gauss sum expression since
$$2n- 2\left( \frac{m+2}{2}\right)=(d+m+2) -(m+2)=d.$$ 

Thus,
$$\prod_{k=1}^ng(T^{(a_k-b_i)t})\cdot{\prod_k}' g(T^{(-b_k+b_i)t})= \prod_{i=1}^d g(T^{w_it}),$$
for some $w=(w_1,\ldots,w_d)$ in $W^{**}$ and for some subset of $\{-b_k+b_i\}_{k=1}^{n}$.\\

We now compare the size of $W^{**}$ to the number of expressions of the form in Equation \ref{eqn:ExtraGauss} that appear on the point count formula. Let $[w]\in W^*=W/\sim$. Let $N_w=\#\{w'\in[w]: w' \text{ contains no zeros}\}$. Note that $N_w$ corresponds to the number of distinct numbers in the $d$-tuple $w=(w_1,\ldots,w_d)$ in the following way. If $s$ is the number of distinct numbers in $w$, then $N_w=d-s$. For example, if $d=6$ and $w=(0,0,0,1,2,3)$, then $N=6-4=2$ and these coset elements are
$$(1,1,1,2,3,4) \text{ and } (2,2,2,3,4,5).$$
All other elements in the coset $[(0,0,0,1,2,3)]$ contain at least one zero. Furthermore, $\sum_{[w]}N_w$ gives the total number of elements in $W^{**}$. \\

We now consider the number of expressions of the form in Equation \ref{eqn:ExtraGauss} we obtain. For each coset representative $[w]$, we obtain the sum
\begin{equation*}
    \frac{G_d}{q^{n}}\sum_{i=1}^n\prod_{k=1}^ng(T^{(a_k-b_i)t})\cdot\prod_{k\not=i}^n T^{(b_k-b_i)t}(-1)g(T^{(-b_k+b_i)t}),
\end{equation*}
where $n$ corresponds to the number of terms left after canceling common factors in the numerator and denominator. Hence, $n$ also equals $d-s$. Adding these values for each $[w]$ yields the total number of expressions of the form in Equation \ref{eqn:ExtraGauss}.\\

Thus, what we have shown is that there is a matching of expressions in our two calculations and that the size of $W^{**}$ equals the number of expressions of the form in Equation \ref{eqn:ExtraGauss} that appear on the point count formula. We now examine the power of $q$ in Equation \ref{eqn:ExtraGauss}. To start, we have 
$$\frac{q^{(d-2)/2}}{q^n}\cdot \left[(2n) \text{ Gauss sums}\right],$$
where one of the Gauss sums is always $g(T^{dt/2})$. Note that if $2n=d$, i.e. $n=\frac{d}{2}$, then we are left with a coefficient of $\frac1q$. Furthermore, we showed above that we can always make this a product of exactly $d$ Gauss sums by canceling out (if $2n>d$) or multiplying (if $2n<d$) pairs of the form $g(T^{it})g(T^{(d-i)t})=qT^{it}(-1)$. Regardless of which case we are in, the exponent of $q$ becomes
$$\frac{d-2}{2} - n +\frac{2n-d}{2}= -1.$$

Our final task is to show that the character evaluation of $-1$ in Equation \ref{eqn:ExtraGauss} matches that of Equation \ref{eqn:W**}. In general, it is not the case that when $d$ is even that $T^t(-1)=1$. However, it appears that we can always get around this obstruction in the following way. It seems to be the case that when the Gauss sum expression has an extraneous $T^t(-1)$, in the coefficient that there is a Gauss sum pair $g(T^{it})g(T^{(d-i)t}$, where $i$ and $d-i$ have the same parity. Thus, letting $g(T^{jt})g(T^{(d-j)t})$ be another pair where $j$ and $d-j$ have the same parity, but opposite of that of $i$ and $d-i$, we see that
\begin{align*}
    g(T^{it})g(T^{(d-i)t}&=qT^{it}(-1)\\                                              &=qT^{it}(-1)\cdot\left(\frac{1}{qT^{jt}(-1)}g(T^{jt})g(T^{(d-j)t})\right)\\
                         &=T^t(-1)g(T^{jt})g(T^{(d-j)t}).
\end{align*}

Thus, we can use swaps of this sort precisely when we would like to remove a $T^t(-1)$ in the coefficient of the Gauss sum expression. Proving that this can always be done would lead to a proof of Conjecture \ref{conjec:DworkHypEven}.

\subsection{Types of Hypergeometric Terms}\label{sec:TypesofTerms}
We would like to characterize the types of hypergeometric terms that appear in Theorem \ref{thm:DworkHypOdd} and Conjecture \ref{conjec:DworkHypEven} for a given $d$. Recall that $W=\{{w}=(w_1,\ldots,w_d)\}$, where $\sum w_i\equiv 0 \pmod{d}$. In Koblitz's formula, we sum over all cosets in $W^*=W/\sim$, where $w\sim w'$ if $w-w'=(1,\ldots, 1)$. As a convention, we will choose a coset representative with the maximum number of zeros. Note that this choice is not necessarily unique, i.e. there may be more than one element in a coset with the maximum number. For example, when $d=4$ we have $(0,0,2,2)$ and $(2,2,0,0)$ in the same coset.\\

In what follows, we show when certain types of terms will occur in the point count formulas of Theorem \ref{thm:DworkHypOdd} and Conjecture \ref{conjec:DworkHypEven}. Throughout this section, assume $\lambda\not=0$.

\begin{lem}
The point count formula contains a constant term if and only if $d$ is odd and $\lambda^d=1$.
\end{lem}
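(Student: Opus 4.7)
The plan is to pinpoint which cosets $[w]\in W^{*}$ in the Koblitz sum can contribute a $\lambda$-independent term to the point count formula and to show that such a non-vanishing contribution exists iff $d$ is odd and $\lambda^{d}=1$.

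First I would apply Corollary \ref{cor:HasseDavenportGeneral} to rewrite
\[
\frac{\prod_{i=1}^{d}g(T^{w_{i}t+j})}{g(T^{dj})}=T^{-dj}(d)\prod_{k=1}^{d-1}g(T^{kt})\cdot\frac{\prod_{i=1}^{d}g(T^{w_{i}t+j})}{\prod_{k=0}^{d-1}g(T^{kt+j})}.
\]
The trailing ratio equals $1$ independently of $j$ iff the multiset $\{w_{1},\ldots,w_{d}\}$ equals $\{0,1,\ldots,d-1\}$, i.e., iff $w$ is a permutation of $(0,1,\ldots,d-1)$. In that case the Koblitz summand $S_{w}$ collapses to
\[
S_{w}=\frac{\prod_{k=1}^{d-1}g(T^{kt})}{q-1}\sum_{j=0}^{q-2}T^{j}(\lambda^{d})=\prod_{k=1}^{d-1}g(T^{kt})\cdot\delta(1-\lambda^{d}),
\]
which is the only kind of coset contribution that is $\lambda$-independent (up to the $\delta$-factor forcing $\lambda^{d}=1$).

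Next I would extract the parity condition. A permutation of $(0,1,\ldots,d-1)$ has coordinate sum $d(d-1)/2$, which is $\equiv 0\pmod{d}$ iff $(d-1)/2\in\mathbb{Z}$, iff $d$ is odd. For $d$ odd, $T^{t}(-1)=1$ (since $T^{t}$ has odd order $d$ and is forced to square to the trivial value on $-1$), so the pairing $g(T^{kt})g(T^{(d-k)t})=q\cdot T^{kt}(-1)$ yields $\prod_{k=1}^{d-1}g(T^{kt})=q^{(d-1)/2}$. The $d!$ orderings of $(0,1,\ldots,d-1)$ partition into $(d-1)!$ cosets of size $d$ under the shift equivalence $\sim$, so the total permutation-coset contribution is $(d-1)!\,q^{(d-1)/2}\,\delta(1-\lambda^{d})$.

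Finally, for every other coset $[w]\in W^{*}$, Proposition \ref{prop:KoblitzBreakDown} expresses $S_{w}$ as a finite field hypergeometric function in $\lambda^{d}$ plus a $\lambda$-independent Gauss sum residue; Section \ref{sec:GaussSumCancel} shows that these residues cancel against the Gauss sum expression in Equation \ref{eqn:W**}, leaving only hypergeometric terms whose values depend genuinely on $\lambda$ through $\chi(\lambda^{d})$ for non-trivial $\chi$. Combining these observations, a constant summand appears in the formula iff the permutation-coset contribution is non-vanishing, iff $d$ is odd and $\lambda^{d}=1$. The main obstacle is justifying the Gauss sum cancellation uniformly in $d$: it is established for $d$ odd in Section \ref{subsec:d_odd}, and reduces for $d$ even to Conjecture \ref{conjec:DworkHypEven} as discussed in Section \ref{subsec:d_even}.
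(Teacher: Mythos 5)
Your proposal is correct and follows essentially the same route as the paper: identify the permutation coset $w=(0,1,\ldots,d-1)$ as the only source of a constant, use the coordinate-sum congruence $d(d-1)/2\equiv 0\pmod d$ to get the parity condition, and evaluate the contribution as $(d-1)!\,q^{(d-1)/2}\delta(1-\lambda^d)$ via Hasse--Davenport and Gauss sum pairing. The only difference is that you carry out the Hasse--Davenport collapse explicitly in general $d$, whereas the paper defers that computation to the $d=5$ case in Section \ref{sec:DworkThreefold}.
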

\begin{proof}
In order to obtain a constant term when we apply Proposition \ref{prop:KoblitzBreakDown} and Theorem \ref{thm:GaussSumIDFn}, all of the entries in $w$ must be distinct. Hence, we must have (up to permutation) $w=(0,1,\ldots,d-1)$. We find that the sum of the entries in $w$ is
\begin{align*}
    \sum_{i=1}^{d-1} i=\frac{d(d-1)}{2}.
\end{align*}
This sum is a multiple of $d$ if and only if $d$ is odd. Thus, the point count formula contains a constant term if and only if $d$ is odd. Furthermore, by counting permutations we see that there will be $(d-1)!$ of these terms and they are all of the form
$$q^{(d-1)/2}\delta(1-\lambda^d), $$
where $\delta(x)=1$ if $x=0$ and $\delta(x)=0$ otherwise. The proof of Theorem \ref{thm:ThreefoldPointCount} in Section \ref{sec:DworkThreefold} describes in greater detail why this is true.
\end{proof}

\begin{lem}
The point count formula contains a ${}_1F_0$ hypergeometric term if and only if $d$ is even. Furthermore, when $d$ is even, there are $\frac{d!}{2}$ such terms.
\end{lem}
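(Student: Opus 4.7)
The plan is to characterize precisely which cosets $[w] \in W^*$ yield a ${}_1F_0$ hypergeometric term after applying Proposition \ref{prop:KoblitzBreakDown} and Theorem \ref{thm:GaussSumIDFn}, and then to count those cosets.

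First I would recall from the proof of Proposition \ref{prop:KoblitzBreakDown} that, after applying the Hasse--Davenport formula, the summand attached to $[w]$ simplifies to a Gauss-sum ratio whose numerator exponents form the multiset $\{w_1 t, \ldots, w_d t\}$ and whose denominator exponents form $\{0, t, \ldots, (d-1)t\}$. The integer $n$ in the resulting ${}_nF_{n-1}$ is exactly the number of factors remaining after cancelation between these two multisets. Hence $n = 1$ holds precisely when the multisets share $d-1$ elements, i.e.\ the representative $w$ contains one value $v \in \{0, \ldots, d-1\}$ twice, omits a single value $u \neq v$, and contains every other element of $\{0, \ldots, d-1\}$ exactly once.

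Next I would impose the defining congruence $\sum w_i \equiv 0 \pmod{d}$. Substituting the description of the multiset gives
$$\sum_{i=1}^d w_i = \frac{d(d-1)}{2} + (v - u) \equiv 0 \pmod{d},$$
so $v - u \equiv -\tfrac{d(d-1)}{2} \pmod{d}$. If $d$ is odd, then $\tfrac{d(d-1)}{2}$ is a multiple of $d$ and this forces $v = u$, contradicting $u \neq v$; so no ${}_1F_0$ terms arise, giving one direction of the biconditional. If $d$ is even, then the right-hand side reduces to $\tfrac{d}{2} \pmod{d}$, and for each $u \in \{0, \ldots, d-1\}$ the value $v \equiv u + \tfrac{d}{2} \pmod{d}$ is uniquely determined and automatically distinct from $u$, producing exactly $d$ admissible ordered pairs $(u, v)$.

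Finally I would count cosets in the even case. For each admissible $(u, v)$ the underlying multiset is completely determined and admits $\tfrac{d!}{2}$ distinct orderings (the factor of $2$ accounts for the repeated entry $v$), giving $d \cdot \tfrac{d!}{2}$ admissible ordered tuples in $W$. Each coset of $W^*$ has cardinality $d$, since the translates $w + k(1, \ldots, 1) \pmod{d}$ for $k = 0, \ldots, d-1$ are pairwise distinct, and the translation $w \mapsto w + k(1, \ldots, 1) \pmod{d}$ sends admissible tuples to admissible tuples (shifting $(u, v)$ to $(u + k, v + k) \pmod{d}$). Dividing the ordered count by the coset size yields $\tfrac{d \cdot d!/2}{d} = \tfrac{d!}{2}$ cosets, as desired. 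The main bookkeeping hurdle is this last step: one must check that translation preserves admissibility and that the $d$ translates of an admissible $w$ traverse all $d$ admissible $(u, v)$ pairs exactly once, so that division by the coset size is clean; this is immediate from the shift formula $(u, v) \mapsto (u + k, v + k) \pmod{d}$.
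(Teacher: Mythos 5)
Your proof is correct and follows essentially the same route as the paper: both reduce the existence of a ${}_1F_0$ term to the condition that $\frac{d(d-1)}{2}$ minus the missing value (equivalently, plus $v-u$) be divisible by $d$, which fails for $d$ odd and forces the missing value to sit at distance $d/2$ from the repeated one when $d$ is even. The only cosmetic difference is in the final count: the paper counts the $\frac{d!}{2}$ permutations of the canonical representative $(0,0,1,\ldots,\frac{d}{2}-1,\frac{d}{2}+1,\ldots,d-1)$, while you count all $d\cdot\frac{d!}{2}$ admissible ordered tuples and divide by the coset size $d$ --- an equivalent (and arguably slightly more careful) bookkeeping.
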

\begin{proof}
In order to obtain a ${}_1F_0$ term when we apply Proposition \ref{prop:KoblitzBreakDown} and Theorem \ref{thm:GaussSumIDFn}, we must have (up to permutation) $w=(0,0,w_1,\ldots, w_{d-2})$, where $0<w_1<\ldots<w_{d-2}\leq d-1$. Thus, the set $\{w_1,\ldots, w_{d-2}\}$ contains all but one element of $\{1, \ldots, d-1\}$. Supposing this missing element is $j$, we find that the sum of the entries in $w$ is
\begin{align*}
    0+0+w_1+\ldots+w_{d-2}&=\sum_{i=1}^{d-1} i-j\\
                          &=\frac{d(d-1)}{2}-j.
\end{align*}
Recall that, by definition, this sum is congruent to $0\pmod d$. When $d$ is odd, $\frac{d(d-1)}{2}$ is a multiple of $d$. However, since $j\in\{1,\ldots,d-1\}$, $j$ is not divisible by $d$. Thus, it is not possible to have a $w$ of this form in the set $W^*$, so there will not be a ${}_1F_0$ term in the point count formula.\\

However, when $d$ is even, neither $j$ nor $\frac{d(d-1)}{2}$ is divisible by $d$. In this case if we let $j=d/2$, we find that
\begin{align*}
    0+0+w_1+\ldots+w_{d-2}&=\sum_{i=1}^{d-1} i -\frac{d}{2}\\
                          &=\frac{d(d-1)}{2}-\frac{d}{2}\\
                          &=\frac{d(d-2)}{2},
\end{align*}
which is divisible by $d$. Written as above, this corresponds to the element $$w=(0,0,1,\ldots,{d/2 -1},{d/2+1},\ldots, {d-2}).$$ By counting permutations of this, we see that there will be $\frac{d!}{2}$ elements in $W^*$ that yield a ${}_1F_0$ term. In fact, by using the same proof techniques that were used in the proof of Proposition 4.5 in \cite{GoodsonDwork2017}, one can show that these terms are all of the form
$$qT^t(-1)T^{\frac{d}{2}t}(1-\lambda^d)=qT^t(-1){}_{1}F_{0}\left(\left.\begin{array}{c}
                T^{\frac{d}{2}t}
               \end{array}\right|\lambda^d\right)_q.$$
\end{proof}

In \cite{GoodsonDwork2017}, we proved that there will always be a ${}_{d-1}F_{d-2}$ term in the point count formula. We now consider some other higher order terms, namely ${}_{d-2}F_{d-3}$ and ${}_{d-3}F_{d-4}$.

\begin{lem}
The point count formula contains a ${}_{d-2}F_{d-3}$ term if and only if $d$ is composite.
\end{lem}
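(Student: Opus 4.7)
The plan is to trace through the cancellation step in Proposition~\ref{prop:KoblitzBreakDown} combined with Theorem~\ref{thm:GaussSumIDFn} to pin down exactly which cosets $[w]\in W^*$ produce a ${}_{d-2}F_{d-3}$ term, and then reduce the existence question to an elementary number-theoretic condition on $d$.

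First I would revisit the Hasse--Davenport reduction early in the proof of Proposition~\ref{prop:KoblitzBreakDown}, which rewrites Koblitz's summand as
$$\prod_{k=1}^{d-1} g(T^{kt})\sum_{j=0}^{q-2}\frac{\prod_{i=1}^{d} g(T^{w_i t+j})}{\prod_{k=0}^{d-1}g(T^{kt+j})}\,T^{dj}(\lambda).$$
Each distinct value $v\in\{0,\ldots,d-1\}$ appearing among the $w_i$ allows exactly one factor in the denominator to cancel with one factor in the numerator. If $s$ denotes the number of distinct entries of $w$, the reduced ratio therefore has $n=d-s$ Gauss sums on top and $n$ on the bottom. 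Applying Theorem~\ref{thm:GaussSumIDFn} then yields a hypergeometric term of type ${}_{n}F_{n-1}={}_{d-s}F_{d-s-1}$.

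Consequently a ${}_{d-2}F_{d-3}$ term arises precisely from cosets admitting a representative with exactly $s=2$ distinct entries. After shifting by a suitable multiple of $(1,\ldots,1)$ I may take one of these values to be $0$ and the other to be some $a\in\{1,\ldots,d-1\}$, so $w$ consists of $d-k$ zeros and $k$ copies of $a$ with $1\le k\le d-1$. The defining constraint $\sum w_i\equiv 0\pmod d$ then collapses to
$$ka\equiv 0\pmod d.$$
The lemma now follows from an elementary observation: this congruence has a solution with $1\le k,a\le d-1$ if and only if $d$ is composite. Indeed, if $d=d_1d_2$ with $1<d_1,d_2<d$, then $(k,a)=(d_1,d_2)$ works; conversely if $d$ is prime, then $d\mid ka$ forces $d\mid k$ or $d\mid a$, contradicting the ranges.

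The main technical obstacle lies in the middle step: I must verify that an $s=2$ coset truly contributes a genuine ${}_{d-2}F_{d-3}$ rather than something that collapses to a lower-order term. Theorem~\ref{thm:GaussSumIDFn} carries the hypothesis $a_k\neq -b_j$, and with the two surviving values $0$ and $a$ one finds $b_j=d-a$ occurring in the denominator index set $\{1,\ldots,d-1\}\setminus\{a\}$ unless $a=d/2$. I would handle such borderline situations exactly as in the analysis of the ${}_1F_0$ terms in the preceding lemma and the coefficient computations in \cite{GoodsonDwork2017}, confirming that the resulting function keeps the advertised number of upper and lower parameters (possibly with some parameters coinciding) and does not degenerate symbolically.
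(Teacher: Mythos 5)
Your argument matches the paper's own proof: both identify the ${}_{d-2}F_{d-3}$ terms with cosets admitting a representative having exactly two distinct entries $0$ and $a$, reduce membership in $W^*$ to the congruence $ka\equiv 0\pmod d$ with $1\le k,a\le d-1$, and observe that this is solvable precisely when $d$ is composite. Your closing caveat about possible degeneration is a reasonable instinct but is not an obstacle here (and the paper does not address it either), since the cancellation pattern leaves exactly $d-2$ numerator and denominator factors regardless of the value of $a$.
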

\begin{proof}
Recall that a ${}_nF_{n-1}$ hypergeometric function in the point count formula corresponds to a Gauss sum expression that has $n$ factors remaining in the numerator after canceling. Having $d-2$ terms left after canceling means that this Gauss sum expression corresponds to a $w\in W^*$ that has only two distinct numbers in its sequence. Since we are choosing the coset representative $w$ so that it contains the maximum number of zeros, this means we have $w=(0,\ldots, 0 , a, \dots, a)$, where $0<a\leq d-1$. Let $m$ be the number of times $a$ occurs, and note that $m\leq d/2$. An element $w$ of this form is possible only when $m\cdot a \equiv 0 \pmod d$, since otherwise $w$ does not lie in $W^*$.\\

If $d$ is prime, this is never possible. If $d$ is composite, this will be possible whenever $a$ is a divisor of $d$. 
\end{proof}

Finally, we show that there will always be hypergeometric terms whose bottom row entries are all the trivial character. Hypergeometric functions of this form are of particular interest since, in many cases, these are known to be congruent modulo $p$ to classical hypergeometric series (see, for example, \cite{GoodsonDwork2017, Swisher2016}).

\begin{lem}
The point count formula will always have hypergeometric terms whose bottom row entries are all the trivial character. In particular, there will always be ${}_{d-3}F_{d-4}$ hypergeometric terms of this kind. Up to permutation, when $d>3$ is odd there will be $\frac{d-1}{2}$ such terms, and when $d>2$ is even there will be $\frac{d-2}{2}$ such terms.
\end{lem}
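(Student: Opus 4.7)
The plan is to read off, from the machinery of Proposition~\ref{prop:KoblitzBreakDown} and Theorem~\ref{thm:GaussSumIDFn}, exactly when the bottom-row parameters of the resulting hypergeometric function are all trivial, and then to enumerate the coset representatives producing the specified order. First I would observe that for a given $w\in W^*$ (chosen with the maximum number of zeros), Proposition~\ref{prop:KoblitzBreakDown} produces an ${}_{n}F_{n-1}$ with bottom row $T^{(a_1-a_2)t},\ldots,T^{(a_1-a_n)t}$, where the $a_i$ index the Gauss sum factors surviving in the numerator after cancellation against the Hasse--Davenport factors $\prod_{i=0}^{d-1}g(T^{it+j})$. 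Concretely, if a value $v\in\{0,\ldots,d-1\}$ appears with multiplicity $k_v$ in $w$, then $v$ contributes $k_v-1$ copies of itself to the multiset of $a_i$'s. So the bottom row is entirely trivial exactly when all the $a_i$'s agree modulo $d$, which happens precisely when a single value repeats in $w$ and every other value appears at most once.

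Next I would invoke the max-zero convention: the repeated value must then be $0$, and if $0$ appears $k_0\geq 2$ times the remaining $d-k_0$ entries of $w$ are distinct nonzero elements of $\{1,\ldots,d-1\}$. Since $n=k_0-1$, the case $n=d-3$ corresponds to $k_0=d-2$, so $w$ has $d-2$ zeros together with two distinct nonzero values $u,v$. The condition $\sum w_i\equiv 0\pmod d$ then collapses to $u+v\equiv 0\pmod d$, and since $0<u<v<d$ this forces $u+v=d$ on the nose. Thus the admissible $w$'s producing ${}_{d-3}F_{d-4}$ terms with trivial bottom row are exactly the permutations of tuples of the form $(0,\ldots,0,u,v)$ with $u+v=d$.

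Finally I would count the admissible pairs $\{u,v\}$ with $0<u<v<d$ and $u+v=d$: for odd $d$ one finds $\{1,d-1\},\{2,d-2\},\ldots,\{(d-1)/2,(d+1)/2\}$, giving $(d-1)/2$ pairs, while for even $d$ the middle pair $\{d/2,d/2\}$ is excluded by $u\neq v$, leaving $(d-2)/2$ pairs. In both ranges the count is at least $1$ for $d\geq 4$, so existence follows. The main subtlety---rather than a genuine obstacle---is the bijection between admissible pairs and cosets of $W^*$ up to permutation: one must check that shifting a tuple of shape $(0,\ldots,0,u,v)$ by any nonzero multiple of $(1,\ldots,1)\pmod d$ destroys the ``$d-2$ zeros'' shape, so the max-zero representative is unique up to permutation and distinct pairs give distinct equivalence classes.
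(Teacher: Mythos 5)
Your proposal is correct and follows essentially the same route as the paper: identify that a trivial bottom row forces all surviving numerator exponents $a_i$ to coincide, use the max-zeros convention to conclude they all equal $0$, reduce the ${}_{d-3}F_{d-4}$ case to tuples $(0,\ldots,0,u,v)$ with $u+v=d$, and count the pairs according to the parity of $d$. Your added checks (the explicit multiplicity bookkeeping for the $a_i$'s and the verification that distinct pairs lie in distinct cosets) are consistent with, and slightly more detailed than, the paper's argument.
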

\begin{proof}
A ${}_nF_{n-1}$ hypergeometric function whose bottom row entries are all the trivial character in the point count formula corresponds to a Gauss sum expression that has $n$ equal factors remaining in the numerator after canceling
$$a_1=a_2=\ldots=a_n.$$
Note that, in general,  if any factors are left after canceling, at least one of them must be zero since we have chosen $w$ to be the coset representative with the maximum number of zeros. Thus, we must have
$$a_1=a_2=\ldots=a_n=0.$$
This corresponds to the coset representative $w=(0,\ldots, 0 , w_1, \dots, w_{d-(n+1)})$, where the number of zeros is $n+1$ and each $w_i$ is distinct. It is always possible to find a set of distinct $w_i$ satisfying $0<w_i\leq d-1$ and $\sum w_i\equiv 0 \pmod d$. Hence, we will always obtain a hypergeometric term whose bottom row entries are all the trivial character.\\

In particular, it is always possible to find a pair of distinct numbers whose sum is congruent to 0 modulo $d$. When $d>3$ is odd, there will always be $\frac{d-1}{2}$ such pairs:
$$1 +(d-1), 2+(d-2), \ldots, \tfrac{d-1}{2}+\tfrac{d+1}{2}.$$
When $d>2$ is even, there will always be $\frac{d-2}{2}$ such pairs: $$1 +(d-1), 2+(d-2), \ldots, \tfrac{d-2}{2}+\tfrac{d+2}{2}.$$
In both cases, these will lead to ${}_{d-3}F_{d-4}$ hypergeometric terms since three factors will be canceled in the corresponding Gauss sum expression.
\end{proof}

\section{Example: Dwork Threefold}\label{sec:DworkThreefold}

We now prove Theorem \ref{thm:ThreefoldPointCount}. To prove this, we will start with Koblitz's point count formula in \cite{Koblitz}. We then break this down into 6 sets of Gauss sum terms. One of these we have already proved is a ${}_4F_3$ hypergeometric function in \cite{GoodsonDwork2017} . Four of the remaining sets can be rewritten using Proposition \ref{thm:GaussSumIDFn}.  Note that McCarthy gave a $p$-adic hypergeometric point count formula in \cite{McCarthy2012b} for the Dwork threefold that holds for $\lambda=1$. Our formula should match this when we use $\lambda^5=1$.\\ 

This work is very similar to our work with Dwork K3 surfaces in \cite{GoodsonDwork2017}, so we omit some details. We also use some results of McCarthy \cite{McCarthy2012b} that apply here. 

\begin{proof}[Proof of Theorem \ref{thm:ThreefoldPointCount}]
Let $W$ be the set of all 5-tuples $w=(w_1,w_2,w_3,w_4,w_5)$ satisfying $0\leq w_i<5$ and $\sum w_i\equiv 0 \pmod 5$. We denote the points on the diagonal hypersurface
$$x_1^5+\ldots+x_5^5=0$$
by $N_q(0):=\sum N_{q}(0,w)$, where
$$
 N_{q}(0,w)=
 \begin{cases}
  0 &\text{if some but not all } w_i=0,\\
  \frac{q^4-1}{q-1} &\text{if all } w_i=0,\\
  -\frac1q J\left(T^{\tfrac{w_1}{5}},T^{\tfrac{w_2}{5}},\ldots,T^{\tfrac{w_5}{5}}\right) &\text{if all } w_i\not=0.\\
 \end{cases}
$$

Theorem 2 of \cite{KoblitzHypersurface} tells us that the number of points on the Dwork threefold is given by
$$\#X_{\lambda}^5(\mathbb F_q)=N_q(0)+\frac1{q-1}\sum\frac{\prod_{i=1}^5g\left(T^{w_it+j}\right)}{g(T^{5j})}T^{5j}(5\lambda)$$
where the sum is taken over $j\in\{0,\ldots,q-2\}$ and $w=(w_1,w_2,w_3,w_4,w_5)$ in $W/\sim$.\\

We wish to simplify this formula. We start by considering the term $N_q(0)$.

\begin{lem}\label{lemma:N0}
 $N_q(0)=\frac{q^4-1}{q-1}  + 50\sum_{i=1}^4g(T^{it})^2g(T^{3it})+ \frac1q\sum_{i=1}^4g(T^{it})^5$.
\end{lem}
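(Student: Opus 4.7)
The plan is to unpack the definition of $N_q(0)$ directly. By definition, $N_q(0) = \frac{q^4-1}{q-1} + \frac{1}{q}\sum_{w\in W^{**}}\prod_{i=1}^5 g(T^{w_it})$, where $W^{**}$ is the set of ordered 5-tuples $(w_1,\ldots,w_5)\in\{1,2,3,4\}^5$ with $\sum w_i\equiv 0\pmod 5$. So it suffices to show that
$$\sum_{w\in W^{**}}\prod_{i=1}^5 g(T^{w_it}) \;=\; 50q\sum_{i=1}^4 g(T^{it})^2 g(T^{3it}) \;+\; \sum_{i=1}^4 g(T^{it})^5.$$

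First I would classify $W^{**}$ by the underlying multiset shape, i.e.\ by the partition of $5$. A quick mod-$5$ check rules out the shapes $(4,1)$, $(3,2)$, $(2,1,1,1)$ and $(1,1,1,1,1)$; for instance, a tuple of shape $(4,1)$ has sum $4a+b \equiv -a+b\pmod 5$, forcing $a=b$, contrary to the shape. The three contributing shapes and the multisets realizing them (all entries in $\{1,2,3,4\}$, sum $\equiv 0\pmod 5$) are: shape $(5)$, giving $\{i,i,i,i,i\}$ for $i=1,2,3,4$ with $1$ ordering each; shape $(3,1,1)$, giving $\{1,1,1,3,4\}$, $\{2,2,2,1,3\}$, $\{3,3,3,2,4\}$, $\{4,4,4,1,2\}$ with $\binom{5}{3,1,1}=20$ orderings each; and shape $(2,2,1)$, giving $\{1,1,2,2,4\}$, $\{1,1,3,3,2\}$, $\{2,2,4,4,3\}$, $\{3,3,4,4,1\}$ with $\binom{5}{2,2,1}=30$ orderings each. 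This totals $4+80+120=204$ tuples, matching $|W^{**}|=(4^5-4)/5$.

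Next I would apply the Gauss-sum reduction $g(T^{it})g(T^{(5-i)t})=T^{it}(-1)q=q$. The identity $T^{it}(-1)=1$ uses that $T^t$ has order $5$ (since $5\mid q-1$): $T^t(-1)$ is simultaneously a fifth root of unity and a square root of $1$, hence $1$. For each multiset of shape $(3,1,1)$ or $(2,2,1)$ there is exactly one factor pair of the form $g(T^{it})g(T^{(5-i)t})$, which collapses the 5-fold Gauss-sum product to $q\cdot g(T^{at})^2g(T^{bt})$ for some exponents $a,b$. A direct check shows that in shape $(3,1,1)$ the four multisets yield, respectively, $q\,g(T^t)^2g(T^{3t})$, $q\,g(T^{2t})^2g(T^t)$, $q\,g(T^{3t})^2g(T^{4t})$, $q\,g(T^{4t})^2g(T^{2t})$, and in shape $(2,2,1)$ the four multisets yield the same four products. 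Recognizing these as $g(T^{it})^2g(T^{3it})$ for $i=1,2,3,4$ (since $6t\equiv t$, $9t\equiv 4t$, $12t\equiv 2t$ mod $q-1$), and summing with multiplicities $20$ and $30$, we get $(20+30)q\sum_{i=1}^4 g(T^{it})^2g(T^{3it})$. Combined with the shape-$(5)$ contribution $\sum_{i=1}^4 g(T^{it})^5$ and division by $q$, this yields the claim.

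There is no real conceptual obstacle; the work is bookkeeping. The one place requiring care is verifying that the four multisets of shape $(3,1,1)$ and the four of shape $(2,2,1)$ indeed produce the same four paired-down products (so that the coefficients combine cleanly to $20+30=50$), and this is easily checked by listing, as above.
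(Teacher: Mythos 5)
Your proof is correct, but it takes a different route from the paper: the paper does not prove this lemma at all, instead citing Equation 3.3 of McCarthy's work on the Dwork threefold. Your argument is a self-contained verification starting from the paper's own Equation (4.1) for $N_q(0)$: the classification of $W^{**}$ by partition shape is right (the mod-$5$ eliminations of shapes $(4,1)$, $(3,2)$, $(2,1,1,1)$, $(1,1,1,1,1)$ all check out, and $4+80+120=204=(4^5-4)/5$ confirms nothing was missed), the multinomial multiplicities $20$ and $30$ are correct, and I verified that each of the four multisets of shape $(3,1,1)$ and each of the four of shape $(2,2,1)$ collapses, via exactly one complementary pair $g(T^{it})g(T^{(5-i)t})=q$, to one of the four products $q\,g(T^{it})^2g(T^{3it})$, each appearing once per shape — so the coefficients do combine to $50$. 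The observation that $T^{it}(-1)=1$ because $T^t(-1)$ is both a fifth root of unity and $\pm1$ is the same fact the paper uses elsewhere for odd $d$. What your approach buys is transparency and independence from the external reference; what the citation buys is brevity. Your version would serve as a complete replacement for the one-line proof in the paper.
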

\begin{proof}
McCarthy proves this result in \cite{McCarthy2012b} (see Equation 3.3).
\end{proof}

Define the equivalence relation $\sim$ on $W$ by $w\sim w'$ if $w-w'$ is a multiple of $(1,1,1,1,1)$. Up to permutation, McCarthy shows that there are six cosets in $W^*=W/\sim$:
$$(0,0,0,0,0)^1, (0,1,2,3,4)^{24}, (0,0,0,1,4)^{20}, (0,0,0,2,3)^{20}, (0,0,1,1,3)^{30}, (0,0,1,2,2)^{30}.$$

Let \begin{equation}\label{eqn:DefS_w5}
 S_{[w]}=\frac{1}{q-1}\sum_{j=0}^{q-2}\frac{\prod_{i=1}^5g\left(T^{w_it +j}\right)}{g\left(T^{5j}\right)}T^{5j}(5\lambda)    
    \end{equation}
denote the summands corresponding to all $w'\in[w]$. Our main tool for simplifying terms of this form is the Hasse-Davenport relation for Gauss sums.\\

In the lemmas that follow, we give explicit formulas for each $S_{[w]}$.

\begin{lem}\label{lem:Threefold0000}
Let $w=(0,0,0,0,0)$. Then 
$$S_{[w]}=-\frac{1}{q}\sum_{i=1}^4g\left(T^{it}\right)^5+q^2{}_{4}F_{3}\left(\left.\begin{array}{cccc}
                T^t&T^{2t}&T^{3t}&T^{4t}\\
		{} &\epsilon&\epsilon&\epsilon
               \end{array}\right|\frac{1}{\lambda^5}\right)_q   $$
\end{lem}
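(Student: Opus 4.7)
The plan is to specialize Proposition \ref{prop:KoblitzBreakDown} to the representative $w = (0,0,0,0,0)$. For this $w$, the numerator of $S_{[w]}$ reduces to $g(T^j)^5$, and applying the Hasse-Davenport relation (Corollary \ref{cor:HasseDavenportGeneral}) with $d=5$ to $g(T^{5j})$ cancels one factor of $g(T^j)$ and combines the character factors as $T^{-5j}(5)\cdot T^{5j}(5\lambda) = T^j(\lambda^5)$. One obtains
\[
S_{[w]} = \frac{\prod_{k=1}^4 g(T^{kt})}{q-1}\sum_{j=0}^{q-2}\frac{g(T^j)^4}{\prod_{i=1}^4 g(T^{it+j})}\,T^{j}(\lambda^5).
\]

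To bring this into the form of Theorem \ref{thm:GaussSumIDFn}, I would split the sum into ``good'' $j \notin \{0, t, 2t, 3t, 4t\}$ and ``bad'' $j$. For good $j$, the identity $g(\chi) g(\bar\chi) = \chi(-1) q$ converts each denominator factor into a numerator Gauss sum, and the sign product $\prod_{i=1}^4 T^{it+j}(-1) = T^{10t+4j}(-1) = 1$ is trivial. Adding and subtracting the bad-$j$ analogues extends the sum to all $j$ and permits invoking Theorem \ref{thm:GaussSumIDFn} with $n = 4$, $a_i = 0$, $b_i = it$. Its parameters reduce cleanly: $m = -6t$ gives $T^m(-1) = 1$, and the theorem's $G$ equals $\prod_{k=1}^4 g(T^{kt})$. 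Collecting the outer prefactor, the ${}_4F_3$ coefficient works out to $A^2/q$, where $A := \prod_{k=1}^4 g(T^{kt})$. A pairing argument using $g(T^{kt}) g(T^{(5-k)t}) = T^{kt}(-1)q$ shows $A = (-1)^t q^2$, and since $q \equiv 1 \pmod 5$ forces $t$ to be even (either $q$ is even, or $q$ is odd whence $q \equiv 1 \pmod{10}$), one has $A = q^2$ and the hypergeometric coefficient is determined.

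For the bad-$j$ contributions, observe that $T^{j}(\lambda^5) = 1$ for every bad $j$, so each such term is a pure Gauss sum. The $j = 0$ contributions to the ``converted'' and ``original'' forms cancel identically. For $j = kt$ with $k \in \{1,2,3,4\}$, exactly one denominator factor in the original form becomes $g(\epsilon) = -1$ (at the index $i = 5-k$); using $g(T^{kt}) g(T^{(5-k)t}) = q$ to eliminate $g(T^{(5-k)t})$ in favor of $g(T^{kt})$, both bad-form contributions combine with the outer prefactor $A/(q-1)$ to collapse to exactly $-\frac{1}{q}\sum_{k=1}^4 g(T^{kt})^5$, matching the stated residual term.

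The main obstacle will be the bookkeeping: tracking the signs coming from $g(\epsilon) = -1$, the compensating powers of $q$ from each $g\bar g$ pairing, and the delicate interplay between the good-$j$ converted form and the bad-$j$ original form in the add-and-subtract step. Because the calculation is the direct specialization of Proposition \ref{prop:KoblitzBreakDown} to $w = (0,0,0,0,0)$ and $d = 5$ (with the simplifying feature that the numerator $g(T^j)^5$ has all equal exponents), no new techniques are needed beyond careful accounting.
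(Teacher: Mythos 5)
Your proposal is correct in substance, but it takes a genuinely different route from the paper: the paper's entire proof of Lemma \ref{lem:Threefold0000} is a one-line citation (specialize Theorem 8.1 of \cite{GoodsonDwork2017}, i.e.\ Theorem \ref{thm:DworkHyp}, to $d=5$), whereas you rederive the identity from scratch by specializing the present paper's own machinery --- Corollary \ref{cor:HasseDavenportGeneral}, the add-and-subtract over the bad $j\in\{t,2t,3t,4t\}$, and Theorem \ref{thm:GaussSumIDFn} with $n=4$, $a_i=0$, $\{b_i\}=\{t,2t,3t,4t\}$. Your computation checks out: the prefactor $A=\prod_{k=1}^4 g(T^{kt})$ equals $q^2$ (your parity argument for $T^t(-1)=1$ is fine; note $q$ is always a power of an odd prime here, so the ``$q$ even'' branch never occurs), the bad-$j$ bookkeeping collapses to exactly $-\tfrac1q\sum_{k=1}^4 g(T^{kt})^5$, and the hypergeometric coefficient is $A^2/q=q^3$. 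Two remarks. First, your $q^3$ disagrees with the $q^2$ printed in the lemma, but it agrees with the $q^{d-2}=q^3$ of Theorem \ref{thm:DworkHyp} and with the $q^3\,{}_4F_3$ term in the assembled formula of Theorem \ref{thm:ThreefoldPointCount}; the $q^2$ in the lemma statement is a typo and your value is the correct one. Second, Theorem \ref{thm:GaussSumIDFn} as stated returns argument $\lambda^d$, while the lemma needs $1/\lambda^5$; this is an inconsistency internal to the paper (the proof of Theorem \ref{thm:GaussSumIDFn} actually ends with $x_0=\lambda^{-d}$ when matched against Equation \ref{eqn:FnDefinition}), so when invoking that theorem you should use the corrected argument $\lambda^{-d}$ to land on the stated $1/\lambda^5$. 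With those two points made explicit, your outline is a complete and more self-contained proof than the paper's.
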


\begin{proof}
This is proved by specializing Theorem 8.1 of \cite{GoodsonDwork2017} to the case where $d=5$.

\end{proof}

\begin{lem}\label{lem:Threefold01234}
Let $w=(0,1,2,3,4)$. Then 
$$S_{[w]}=24q^2\delta(1-\lambda^5), $$
where $\delta(x)=1$ if $x=0$ and $\delta(x)=0$ otherwise, and the coefficient of 24 corresponds to the number of permutations of $(0,1,2,3,4)$ that are in distinct cosets.
\end{lem}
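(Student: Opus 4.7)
The plan is to exploit the fact that when $w = (0,1,2,3,4)$, the numerator $\prod_{i=1}^5 g(T^{w_i t + j}) = \prod_{i=0}^{4} g(T^{it+j})$ is precisely the product appearing on the right-hand side of the Hasse-Davenport relation. Specifically, Corollary \ref{cor:HasseDavenportGeneral} with $d = 5$ gives
$$\frac{\prod_{i=0}^{4} g(T^{it+j})}{g(T^{5j})} = T^{-5j}(5)\prod_{i=1}^{4} g(T^{it}),$$
so the entire $j$-dependence collapses onto a single character. Substituting into the definition of $S_{[w]}$ and combining the factor $T^{-5j}(5)$ with $T^{5j}(5\lambda)$, the powers of $5$ cancel and we are left with
$$S_{[w]} = \frac{\prod_{i=1}^{4} g(T^{it})}{q-1}\sum_{j=0}^{q-2} T^{j}(\lambda^5).$$

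Next I would apply character orthogonality: the inner sum evaluates to $(q-1)\delta(1-\lambda^5)$, since $T$ generates $\widehat{\mathbb F_q^\times}$ and $\lambda \neq 0$. To evaluate the constant $\prod_{i=1}^4 g(T^{it})$, I would pair $g(T^{it})g(T^{(5-i)t}) = T^{it}(-1)q$ for $i = 1, 2$, yielding $q^2 \cdot T^{3t}(-1)$. Because $5$ is odd, $T^t(-1)$ is simultaneously a square root of $1$ and a fifth root of $1$, forcing $T^t(-1) = 1$ and hence $\prod_{i=1}^4 g(T^{it}) = q^2$. Thus for any single coset representative that is a permutation of $(0,1,2,3,4)$, we obtain $S_{[w]} = q^2 \delta(1-\lambda^5)$.

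Finally, the coefficient $24$ comes from counting: the $5! = 120$ permutations of $(0,1,2,3,4)$ are grouped by the relation $\sim$, and adding $(k,k,k,k,k) \pmod 5$ to $(0,1,2,3,4)$ simply cycles its entries, producing exactly $5$ permutations per coset. Hence there are $120/5 = 24$ distinct cosets in $W^*$ whose representatives are permutations of $(0,1,2,3,4)$. Since the Koblitz summand is symmetric in the $w_i$, each of these $24$ cosets contributes the same value $q^2 \delta(1-\lambda^5)$, giving the claimed total. The main obstacle is essentially bookkeeping, specifically verifying that the Hasse-Davenport simplification applies uniformly for all $j$ (including $j \equiv 0 \pmod t$, where one must be careful about trivial characters appearing in the quotient) and confirming that the coset-counting argument correctly produces the factor $24$.
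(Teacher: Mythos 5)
Your proposal is correct and follows essentially the same route as the paper: apply the Hasse--Davenport specialization to collapse the quotient to $T^{-5j}(5)\prod_{i=1}^4 g(T^{it})$, cancel the powers of $5$, evaluate $\prod_{i=1}^4 g(T^{it})=q^2$ via the pairing $g(\chi)g(\overline\chi)=\chi(-1)q$ with $T^t(-1)=1$, and finish with orthogonality; the count of $120/5=24$ cosets matches as well.
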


\begin{proof}
When $w=(0,1,2,3,4)$, we have
\begin{align*}
    S_{[w]}&=\frac{24}{q-1}\sum_{j=0}^{q-2} \frac{g(T^j)g(T^{t+j})g(T^{2t+j})g(T^{3t+j})g(T^{4t+j})}{g(T^{5j})} T^{5j}(5\lambda).
\end{align*}
We use the Hasse-Davenport relation and properties of Gauss sums to rewrite this and obtain
\begin{align*}
    S_{[w]}&=\frac{24}{q-1}\sum_{j=0}^{q-2} \prod_{i=1}^{4}g(T^{it}) T^{5j}(\lambda)\\
    &=\frac{24q^2}{q-1}\sum_{j=0}^{q-2} T^{5j}(\lambda)\\
    &=24q^2\delta(1-\lambda^5), 
\end{align*}
since
$$
\sum_{j=0}^{q-2} T^{j}(\lambda^5)=
\begin{cases}
q-1&\text{ if }\lambda^5=1,\\
0&\text{ otherwise.}
\end{cases}
$$

\end{proof}

We now work to rewrite the remaining terms. Unlike in our work with Dwork K3 surfaces in \cite{GoodsonDwork2017}, these terms all break down in a similar manner. Thus, we will carefully show our work for $w=(0,0,0,1,4)$ and state the remaining results.

\begin{lem}\label{lem:Threefold00014}
Let $w=(0,0,0,1,4)$. Then 
$$S_{[w]}=-20g(T^{2t})^2g(T^t)-20g(T^{3t})^2g(T^{4t})+ 20q^2{}_{2}F_{1}\left(\left.\begin{array}{cc}
                T^{2t}&T^{3t}\\
		{} &\epsilon               \end{array}\right|\frac{1}{\lambda^5}\right)_q.$$
\end{lem}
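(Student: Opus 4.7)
The plan is to apply Proposition \ref{prop:KoblitzBreakDown} directly to the coset representative $w=(0,0,0,1,4)$ and compute each piece explicitly. There are $5!/3!=20$ distinct ordered permutations of $(0,0,0,1,4)$, and since adding $(1,1,1,1,1)$ changes the multiset, each permutation lies in its own coset of $W^{\ast}$; every such coset contributes the same value, so $S_{[w]}$ will be $20$ times the contribution of the single representative.

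First I would apply the Hasse--Davenport relation (Corollary \ref{cor:HasseDavenportGeneral}) to expand $g(T^{5j})$, absorbing the $T^{-5j}(5)$ factor into $T^{5j}(5\lambda)$, and cancelling the common Gauss sums $g(T^{j}),\,g(T^{t+j}),\,g(T^{4t+j})$ between numerator and denominator. The summand becomes
$$\prod_{k=1}^{4}g(T^{kt})\cdot\frac{g(T^{j})^{2}}{g(T^{2t+j})\,g(T^{3t+j})}\,T^{5j}(\lambda),$$
placing us squarely in the setting of Proposition \ref{prop:KoblitzBreakDown} with $(a_1,a_2)=(0,0)$, $(b_1,b_2)=(2,3)$, and $n=2$.

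Next I would use the two simplifications available because $d=5$ is odd: the character $T^{t}$ has order $5$, so $T^{at}(-1)=1$ for every integer $a$; and $g(T^{at})\,g(T^{(5-a)t})=q$ for $a\in\{1,2,3,4\}$. Feeding the parameters into Proposition \ref{prop:KoblitzBreakDown} produces a ${}_2F_1$ with top parameters $T^{(a_1-b_2)t}=T^{-3t}=T^{2t}$ and $T^{(a_1-b_1)t}=T^{-2t}=T^{3t}$, and bottom parameter $T^{(a_1-a_2)t}=\epsilon$, namely
$${}_2F_1\!\left(\!\left.\begin{array}{cc}T^{2t}&T^{3t}\\{}&\epsilon\end{array}\right|\tfrac{1}{\lambda^5}\right)_{\!q}.$$
A short computation shows the scalar prefactor is $G\cdot G_d'=g(T^{2t})\,g(T^{3t})\cdot q=q^{2}$, so a single coset contributes $q^{2}\cdot{}_2F_1(\cdots)_q$ to the hypergeometric part, and scaling by $20$ produces the ${}_2F_1$-term in the lemma.

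Finally I would evaluate the exceptional Gauss-sum correction in Proposition \ref{prop:KoblitzBreakDown}, coming from the two values $j\equiv-2t$ and $j\equiv-3t\pmod{q-1}$ at which $g(\chi)g(\overline{\chi})=\chi(-1)q$ fails. Substituting $(a_1,a_2,b_1,b_2)=(0,0,2,3)$ into
$$\frac{G_d}{q^{n}}\sum_{i=1}^{n}\prod_{k=1}^{n}g(T^{(a_k-b_i)t})\prod_{k\neq i}T^{(b_k-b_i)t}(-1)\,g(T^{(-b_k+b_i)t})$$
and using $T^{at}(-1)=1$ together with $G_d=q^{2}$ collapses the expression to $g(T^{3t})^{2}g(T^{4t})+g(T^{2t})^{2}g(T^{t})$; with the overall minus sign from Proposition \ref{prop:KoblitzBreakDown} and the factor of $20$, this produces exactly the two Gauss-sum terms in the statement. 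The main obstacle is bookkeeping: tracking the various $T^{(\cdot)t}(-1)$ factors and the powers of $q$ introduced by Hasse--Davenport, by the conversion $g(\chi)g(\bar\chi)=\chi(-1)q$, and by the constants $G$, $G_d$, $G_d'$. Once these are organized, the odd-$d$ identities $T^{at}(-1)=1$ and $g(T^{at})g(T^{(5-a)t})=q$ make all pieces reduce cleanly.
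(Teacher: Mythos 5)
Your proposal is correct and follows essentially the same route as the paper: the paper's proof of this lemma re-derives by hand (Hasse--Davenport, separating the exceptional $j=2t,3t$, converting denominators via $g(\chi)g(\overline{\chi})=\chi(-1)q$, then invoking Theorem \ref{thm:GaussSumIDFn}) exactly the steps you obtain by specializing Proposition \ref{prop:KoblitzBreakDown} to $(a_1,a_2)=(0,0)$, $(b_1,b_2)=(2,3)$, and your bookkeeping of $G$, $G_d$, $G_d'$ and the multiplicity $20$ checks out. The only caveat is cosmetic: the argument of the hypergeometric function comes out as $1/\lambda^5$ (as in your final display and in the lemma), even though Theorem \ref{thm:GaussSumIDFn} as printed says $\lambda^d$, so it is worth noting that you are using the version consistent with the theorem's actual derivation.
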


\begin{proof}
When $w=(0,0,0,1,4)$, we have
\begin{align*}
    S_{[w]}&=\frac{20}{q-1}\sum_{j=0}^{q-2} \frac{g(T^j)^3g(T^{t+j})g(T^{4t+j})}{g(T^{5j})} T^{5j}(5\lambda).
\end{align*}
We use Hasse-Davenport to write
\begin{align*}
    S_{[w]}&=\frac{20q^2}{q-1}\sum_{j=0}^{q-2} \frac{g(T^j)^2}{g(T^{3t+j})g(T^{2t+j})} T^{5j}(\lambda). 
\end{align*}

Note that when $j=2t$ or $3t$, we have $g(T^0)$ in the denominator. We separate these two cases from the summand and evaluate them to get
\begin{align*}
    \frac{g(T^{2t})^2}{g(T^{3t+2t})g(T^{2t+2t})} T^{10t}(\lambda)&= -\frac1q g(T^{2t})^2g(T^t),\\
    \frac{g(T^{3t})^2}{g(T^{3t+3t})g(T^{2t+3t})} T^{15t}(\lambda)&= -\frac1q g(T^{3t})^2g(T^{4t}).
\end{align*}

For the remaining values of $j$ we use the relationship $g(\chi)g(\overline\chi)=\chi(-1)q$, noting that $T^t(-1)=T^{5t}(-1)=1$.
\begin{align*}
    \sum_{j=0,j\not=2t,3t}^{q-2} \frac{g(T^j)^2}{g(T^{3t+j})g(T^{2t+j})} T^{5j}(\lambda)&=\frac{1}{q^2}\sum_{j=0,j\not=2t,3t}^{q-2} g(T^j)^2g(T^{2t-j})g(T^{3t-j}) T^{5j}(\lambda). 
\end{align*}

Note that for $j=2t,3t$ we have 
\begin{align*}
    g(T^{2t})^2g(T^{2t-2t})g(T^{3t-2t}) T^{10t}(\lambda)&=-g(T^{2t})^2g(T^{t}),\\
    g(T^{3t})^2g(T^{2t-3t})g(T^{3t-3t}) T^{15t}(\lambda)&=-g(T^{3t})^2g(T^{4t}).
\end{align*}

Finally, we use Corollary \ref{thm:GaussSumIDFn} to rewrite the main Gauss sum term.

\begin{align*}
    \sum_{j=0}^{q-2} g(T^j)^2g(T^{2t-j})g(T^{3t-j}) T^{5j}(\lambda)&=q^2(q-1){}_{2}F_{1}\left(\left.\begin{array}{cc}
                T^{2t}&T^{3t}\\
		{} &\epsilon               \end{array}\right|\frac{1}{\lambda^5}\right)_q.
\end{align*}

Hence,
\begin{align*}
    S_{[w]}&=\frac{20q^2}{q-1}\left(-\frac1q g(T^{2t})^2g(T^t)-\frac1q g(T^{3t})^2g(T^{4t})+\frac{1}{q^2}g(T^{2t})^2g(T^{t})+\frac{1}{q^2}g(T^{3t})^2g(T^{4t})\right.\\
    &\hspace{1.5in}+\left. (q-1){}_{2}F_{1}\left(\left.\begin{array}{cc}
                T^{2t}&T^{3t}\\
		{} &\epsilon               \end{array}\right|\frac{1}{\lambda^5}\right)_q\right)\\
	&=\frac{20}{q-1}g(T^{2t})^2g(T^t)(1-q)+ \frac{20}{q-1}g(T^{3t})^2g(T^{4t})(1-q)+ 20q^2{}_{2}F_{1}\left(\left.\begin{array}{cc}
                T^{2t}&T^{3t}\\
		{} &\epsilon               \end{array}\right|\frac{1}{\lambda^5}\right)_q\\
	&=-20g(T^{2t})^2g(T^t)-20g(T^{3t})^2g(T^{4t})+ 20q^2{}_{2}F_{1}\left(\left.\begin{array}{cc}
                T^{2t}&T^{3t}\\
		{} &\epsilon               \end{array}\right|\frac{1}{\lambda^5}\right)_q	
\end{align*}

\end{proof}

Similarly, we have

\begin{align*}
S_{[0,0,0,2,3]}&=-20g(T^{t})^2g(T^{3t})-20g(T^{4t})^2g(T^{2t})+ 20q^2{}_{2}F_{1}\left(\left.\begin{array}{cc}
                T^{t}&T^{4t}\\
		{} &\epsilon               \end{array}\right|\frac{1}{\lambda^5}\right)_q,\\
S_{[0,0,1,1,3]}&=-30g(T^{3t})^2g(T^{4t})-30g(T^{2t})^2g(T^{t})+ 30q^2{}_{2}F_{1}\left(\left.\begin{array}{cc}
                T^{t}&T^{3t}\\
		{} &T^{4t}               \end{array}\right|\frac{1}{\lambda^5}\right)_q,\\
S_{[0,0,1,2,2]}&=-30g(T^{t})^2g(T^{3t})-30g(T^{4t})^2g(T^{2t})+ 30q^2{}_{2}F_{1}\left(\left.\begin{array}{cc}
                T^{t}&T^{2t}\\
		{} &T^{3t}               \end{array}\right|\frac{1}{\lambda^5}\right)_q.
\end{align*}

We now combine all of these terms to get the complete point count formula for the Dwork threefold. Note that the extra Gauss sum terms from $N_q(0)$ will cancel with the extra Gauss sum terms from the $S_{[w]}$ terms. Hence,

\begin{align*}
    \#X_{\lambda}^5(\mathbb F_q)&=\frac{q^{4}-1}{q-1}+ 24 q^2\delta(1-\lambda^5)+ q^{3}{}_{4}F_{3}\left(\left.\begin{array}{cccc}
                T^t&T^{2t}&\ldots &T^{4t}\\
		{} &\epsilon&\ldots&\epsilon \end{array}\right|\frac{1}{\lambda^5}\right)_q\\\\
              \hspace{1in}&+20q^2{}_{2}F_{1}\left(\left.\begin{array}{cc}
                T^{2t}&T^{3t}\\
		{} &\epsilon               \end{array}\right|\frac{1}{\lambda^5}\right)_q+20q^2{}_{2}F_{1}\left(\left.\begin{array}{cc}
                T^{t}&T^{4t}\\
		{} &\epsilon               \end{array}\right|\frac{1}{\lambda^5}\right)_q\\\\
		\hspace{1in}&+30q^2{}_{2}F_{1}\left(\left.\begin{array}{cc}
                T^{t}&T^{3t}\\
		{} &T^{4t}               \end{array}\right|\frac{1}{\lambda^5}\right)_q+30q^2{}_{2}F_{1}\left(\left.\begin{array}{cc}
                T^{t}&T^{2t}\\
		{} &T^{3t}               \end{array}\right|\frac{1}{\lambda^5}\right)_q.
\end{align*}

\end{proof}

\section*{Acknowledgements}
I would like to thank Benjamin Brubaker for helpful conversations while working on these results. I also thank the reviewer for their very thorough and helpful comments. Lastly, I would like to thank Doris McCarthy for her many years of unwavering support, encouragement, and interest in my work. Thank you for thinking of me.

\bibliographystyle{plain}
\bibliography{DworkHyp}

\end{document}